\documentclass[11pt]{amsart}

\usepackage{amsmath,amsfonts,amsthm,graphicx, color}
\usepackage{amsmath}
\usepackage{graphicx,psfrag}  
\usepackage[psamsfonts]{amssymb}
\usepackage{amscd}
\usepackage[all,cmtip]{xy}
\usepackage{hyperref}

\hyphenation{diff-eo-morph-isms}
\hyphenation{cent-er}
\hyphenation{bunch-ed}
\hyphenation{pa-ra-me-tr-ized}
  
\title[Symplectomorphisms with positive metric entropy]{Symplectomorphisms with positive metric entropy%\\(Preliminary version)
}
\author[A. Avila, S. Crovisier, A. Wilkinson]{A. Avila, S. Crovisier and A. Wilkinson}
\date{\today}

\thanks{
S.C. was partially supported by the ERC project 692925 NUHGD.
A.W. was supported by NSF grant DMS$-1402852$.
}

\address{Artur Avila \newline
\rm Institut f\"ur Mathematik,
Universit\"at Z\"urich\newline
Winterthurerstrasse 190,
CH-8057 Z\"urich,
Switzerland
\newline \& IMPA, Estrada Dona Castorina 110, Rio de Janeiro, Brazil.}
%\email{artur@math.univ-paris-diderot.fr}
%\urladdr{www.impa.br/~avila/}
\address{Sylvain Crovisier \newline
\rm CNRS - Laboratoire de Math{\'e}matiques d'Orsay, UMR 8628\newline
Universit{\'e} Paris-Sud 11, 91405 Orsay Cedex, France.}
%\email{ Sylvain.Crovisier@math.u-psud.fr}
%\urladdr{www.math.u-psud.fr/~crovisie/}
\address{Amie Wilkinson \newline
\rm Department of Mathematics,
University of Chicago, \newline 5734 S. University Avenue Chicago, Illinois 60637, USA.}
%\email{wilkinso@math.uchicago.edu}
%\urladdr{www.math.uchicago.edu/~wilkinso/}

\theoremstyle{plain}
\newtheorem{theorem}{Theorem}
\newtheorem{lemma}{Lemma}[section]

\newtheorem{proposition}[lemma]{Proposition}

\newtheorem*{corollary*}{Corollary}
\newtheorem*{claim}{Claim}

\newtheoremstyle{vThm*}%
{}{}%
{\itshape}%
{-3pt}{\bfseries}%
{}{ }%
{\thmnote{#3}}%
\theoremstyle{vThm*}
\newtheorem*{nThm*}{}

\theoremstyle{definition}

\newtheorem*{definition*}{Definition}
\newtheorem{remark}[theorem]{Remark}

\def\endproof{$\diamond$ \bigskip}

\def\Diff{\operatorname{Diff} }
\def\Symp{\operatorname{Symp} }

\def\title{\em}

\def\bar{\overline}

\def\id{\hbox{id}}

\def\cW{\mathcal{W}}

\def\cB{\mathcal{B}}
\def\cD{\mathcal{D}}
\def\cF{\mathcal{F}}

\def\cZ{\mathcal{Z}}
\def\cF{\mathcal{F}}
\def\cU{\mathcal{U}}
\def\cL{\mathcal{L}}

\def\cG{\mathcal{G}}
\def\cR{\mathcal{R}}

\def\C{\mathcal{C}}
\def\cC{\mathcal{C}}

\def\cO{\mathcal{O}}

\def\P{\mathcal{P}}

\def\Z{D}

\def\cP{\mathcal{P}}

\def\transverse{\,\raise2pt\hbox to1em{\hfil$\top$\hfil}\hskip -1em \hbox
to1em{\hfil$\cap$\hfil}\,} 

\newcommand\RR{{\mathbb R}}
\newcommand\CC{{\mathbb C}}
\newcommand\PP{{\mathbb P}}

\newcommand\ZZ{{\mathbb Z}}

\newlength{\figboxwidth} \setlength{\figboxwidth}{5.8in}

\def\W{\mathcal{W}}

\begin{document}

\begin{abstract}
We  obtain a dichotomy for $C^1$-generic symplectomorphisms:
either all the Lyapunov exponents of almost every point vanish, or the map is partially hyperbolic and ergodic with respect to volume. This completes a program first put forth by Ricardo Ma\~n\'e.

A main ingredient in our proof is a generalization to partially hyperbolic invariant sets of the main result in \cite{DW} that stable accessibility is $C^1$ dense among partially hyperbolic diffeomorphisms.
\end{abstract}

\maketitle
%\addcontentsline{toc}{section}{\mbox{}\quad\quad Introduction}
%\DeactivateToc
\section*{Introduction}
A measurable map $f\colon M\to M$ is {\em ergodic} with respect to an invariant probability measure $\mu$ if every $f$-invariant subset of $M$ is $\mu$-trivial: $f^{-1}(A) = A$ implies $\mu(A) = 0$ or $1$, for every measurable $A\subset M$.   In the context of this paper, where $M$ is a closed manifold, $f$ is a homeomorphism, and $\mu=m$ is a normalized volume, ergodicity is equivalent to the equidistribution of almost every orbit: for $m$-almost every $x\in M$ and every continuous $\phi\colon M\to \RR$,
\[
\lim_{n\to \infty} \frac{1}{n} \sum_{j=1}^{n} \phi(f^j(x)) = \int_{M}\phi\, d{m}.
\]

   In his 1983 ICM address~\cite{M},
Ma\~n\'e announced the following result, whose proof was later completed by Bochi~\cite{B}.

\begin{theorem}[Ma\~n\'e-Bochi]
$C^1$-generically, an  area preserving diffeomorphism $f$ of a closed,
connected surface $M^2$ is either Anosov and ergodic or satisfies
\[\lim_{n\to\pm \infty} \frac{1}{n} \log\| D_x f^nv\|
= 0,
\]
for a.e. $x\in M$ and every  $0 \neq v \in T_xM$.
\end{theorem}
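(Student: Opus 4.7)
The plan is to follow the strategy initiated by Ma\~n\'e and completed by Bochi, organized around the upper semicontinuity of the integrated top Lyapunov exponent. For $f \in \Diff^1_m(M^2)$, Oseledets' theorem yields at $m$-almost every $x$ a top exponent $\lambda^+(x,f) \geq 0$, and area preservation forces $\lambda^-(x,f) = -\lambda^+(x,f)$. I would study the functional
\[
\Lambda(f) := \int_M \lambda^+(x,f)\, dm(x) = \inf_{n\geq 1} \frac{1}{n}\int_M \log\|D_xf^n\|\, dm(x),
\]
where the second equality is Kingman's theorem together with subadditivity. Since $f\mapsto \int \log\|D_xf^n\| \, dm$ is continuous for each fixed $n$, an infimum of continuous functionals gives that $\Lambda$ is upper semicontinuous on $\Diff^1_m(M^2)$, and hence the set $R$ of continuity points of $\Lambda$ is residual.

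The core of the argument is then a dichotomy at continuity points: for $f\in R$, either $\Lambda(f)=0$, or $f$ admits a dominated splitting $TM=E\oplus F$ on the set $\Gamma(f)=\{x:\lambda^+(x,f)>0\}$. The contrapositive is the Ma\~n\'e--Bochi perturbation lemma, which I would state as follows: if there is no dominated splitting on $\Gamma(f)$ and $\Lambda(f)>0$, then for every $\eps>0$ there is a $C^1$-small perturbation $g$ of $f$, still area preserving, with $\Lambda(g)<\Lambda(f)-\delta(\eps)$ for some uniform $\delta>0$, contradicting $f\in R$. The mechanism is that absence of a dominated splitting along a typical orbit means the angle between the Oseledets directions $E^s$ and $E^u$ falls below any prescribed threshold at some iterate. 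Along such orbit pieces one composes $Df$ with small rotations, realized by a genuine $C^1$-small perturbation supported in pairwise disjoint balls along an orbit segment; the rotation swaps $E^s$ and $E^u$, which collapses the growth of $\|D_xg^n\|$ on that segment. A Vitali-type selection of such segments, of total measure bounded below, then produces the uniform drop in $\Lambda$. Making this rigorous — in particular, translating cocycle-level perturbations to actual symplectic diffeomorphism perturbations and showing the selected set has definite measure — is the main obstacle, and is precisely what Bochi supplied.

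To conclude, I would argue that a dominated splitting on a positive-measure set $\Gamma(f)$ for an area-preserving surface map forces $f$ to be Anosov: the splitting extends to the closure, and on a connected surface an area-preserving diffeomorphism with a dominated splitting on any open invariant set must have that splitting extend to all of $M$ (otherwise one meets a tangency on the boundary, contradicting area preservation and connectedness), hence the splitting is uniformly hyperbolic. Finally, the classical theorem of Anosov--Sinai ensures that a $C^2$ volume-preserving Anosov diffeomorphism is ergodic; the $C^1$ case of Anosov on a surface still carries enough regularity of the invariant foliations to run the Hopf argument, giving ergodicity and completing the dichotomy.
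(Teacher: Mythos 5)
This statement is not proved in the paper at all: it is quoted as background and attributed to Ma\~n\'e \cite{M} and Bochi \cite{B}, so the only fair comparison is with that known strategy. Your outline does follow it: upper semicontinuity of $\Lambda(f)=\int\lambda^+\,dm$, genericity of continuity points, and the dichotomy ``zero integrated exponent or dominated Oseledets splitting on the set of nonzero exponents.'' But as a proof the proposal has genuine gaps beyond the one you flag. The central perturbation theorem (at a continuity point, absence of domination forces a definite drop of $\Lambda$) is exactly the hard content of \cite{B}, and deferring it means the proposal is a road map, not an argument; that is acceptable as far as it goes, but the two steps you do claim to supply are the ones that fail. First, the passage from ``dominated splitting on the positive-measure set $\Gamma(f)$'' to ``Anosov'' is wrong as written: $\Gamma(f)$ is an invariant set of positive measure, not an open set, so the assertion about open invariant sets and a ``tangency on the boundary'' does not apply to anything in your situation, and it is not a meaningful mechanism in any case. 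What is true and easy is that the closure of $\Gamma(f)$ carries a dominated, hence (by area preservation on a surface) uniformly hyperbolic, splitting; the real issue is to show that a uniformly hyperbolic set of positive Lebesgue measure must be all of $M$. In the $C^1$ category this cannot be done by invoking absolute continuity of the invariant laminations (it may fail for $C^1$ maps, cf.\ Robinson--Young); one needs a density-point/saturation argument of the type in \cite{Zhang} showing that a positive-volume (partially) hyperbolic set contains the stable and unstable manifolds of its points, combined with local product structure or a genericity/transitivity argument to conclude the set has interior and equals $M$. This is precisely the kind of step the present paper has to take seriously in its proof of Theorem A (via \cite{Zhang} and \cite{ABC}), and it is missing from your sketch.

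Second, the ergodicity claim in the Anosov case is not justified. The Hopf argument requires absolute continuity of the stable and unstable foliations, which is a theorem of Anosov--Sinai only for $C^{1+\alpha}$ (or $C^2$) systems; for $C^1$ Anosov diffeomorphisms the foliations need not be absolutely continuous, and ergodicity of an arbitrary $C^1$ area-preserving Anosov diffeomorphism is not known. The correct statement is generic: ergodicity holds for the $C^1$-generic conservative Anosov (indeed partially hyperbolic) map, e.g.\ by the results of \cite{ABW}, and this genericity is exactly how the ``Anosov and ergodic'' alternative should be obtained. So to repair the proposal you should (i) replace the ``open set/boundary tangency'' paragraph by a saturation-plus-interior argument (or a genericity argument) for positive-measure hyperbolic sets, and (ii) replace the appeal to the Hopf argument for $C^1$ Anosov maps by a citation of a generic-ergodicity result.
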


In \cite{ACW},  we proved the optimal generalization of this result to volume-preserving diffeomorphisms in any dimension:

\begin{theorem}[\cite{ACW}]\label{t=ACW}
$C^1$-generically, a volume-preserving diffeomorphism $f$ of a closed, connected
manifold $M$ is either nonuniformly Anosov and ergodic or satisfies
\begin{equation}\label{e=allvanish}\lim_{n\to\pm \infty} \frac{1}{n} \log\| D_x f^nv\|
= 0,
\end{equation}
for a.e. $x\in M$ and every  $0 \neq v \in T_xM$.
\end{theorem}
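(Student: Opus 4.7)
The plan is to combine a Baire--category argument driven by upper semicontinuity of integrated Lyapunov exponents, a higher-dimensional Ma\~n\'e--Bochi dichotomy producing a dominated splitting whenever exponents persist under perturbation, and a partially hyperbolic accessibility/ergodicity argument (the generalization of \cite{DW} announced in the abstract) on the hyperbolic side of the dichotomy. First I would introduce, for each $1 \le k < \dim M$, the functional $L_k : \Diff^1_m(M) \to \RR$ given by $L_k(f) = \int_M \sum_{i=1}^k \lambda_i(f,x)\, dm(x)$, with $\lambda_1(f,x) \ge \lambda_2(f,x) \ge \cdots$ the Oseledets exponents. Subadditivity of $n \mapsto \log \|\Lambda^k D_x f^n\|$ makes each $L_k$ upper semicontinuous in the $C^1$ topology, so the set $\mathcal{R}_0$ of common continuity points of $L_1,\ldots,L_{\dim M - 1}$ is residual. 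A Bochi--Viana style perturbation argument (realizing small rotations inside the derivative cocycle along long return pieces of positive-density sets) then yields, at each $f \in \mathcal{R}_0$ and each index $k$, the following alternative: either $\lambda_k(f,\cdot) = \lambda_{k+1}(f,\cdot)$ almost everywhere, or $Df$ admits a dominated splitting of index $k$ over the closure of the set where the two exponents differ.

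Intersecting this alternative over all indices produces a finest dominated splitting $TM|_\Lambda = E_1 \oplus \cdots \oplus E_r$ over an invariant set $\Lambda$ of full $m$-measure. If $r=1$ then all exponents coincide on a full-measure set; combined with volume preservation $\sum_i \lambda_i(f,x) = 0$ for a.e.\ $x$, this forces \eqref{e=allvanish} and puts $f$ on the vanishing side. Otherwise further $C^1$ perturbations, using the extremal indices where the Lyapunov exponents are bounded away from zero on a definite fraction of orbits, upgrade the splitting to a partially hyperbolic decomposition $TM = E^s \oplus E^c \oplus E^u$ with the center bundle $E^c$ carrying only zero Lyapunov exponents; this is the nonuniform Anosov structure. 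At this point the new ingredient enters: the generalization of \cite{DW} to partially hyperbolic invariant sets shows that stable accessibility on $\Lambda$ is $C^1$ dense, hence residual in a refinement $\mathcal{R}_1 \subset \mathcal{R}_0$. The Pugh--Shub ergodicity criterion (essential accessibility together with a weak form of center bunching, attainable after an additional perturbation) then forces ergodicity of $m$, yielding the nonuniformly Anosov and ergodic alternative.

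The main obstacle will be the passage from abstract dominated splitting to usable partial hyperbolicity, together with the transport of the accessibility density theorem into the invariant-set regime. Since the partially hyperbolic invariant set $\Lambda$ need not be the whole manifold, perturbations must be supported in a small neighborhood of $\Lambda$ without disturbing its dominated structure, while still modifying stable and unstable holonomies enough to break up accessibility classes; and one must deduce $m$-ergodicity from accessibility relative to an invariant subset rather than the whole manifold, for which standard accessibility-implies-ergodicity results do not directly apply. This invariant-set extension of \cite{DW} is exactly the technical heart identified in the abstract, and it is where the bulk of the paper's work would lie.
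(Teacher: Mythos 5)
There is no proof of this statement in the paper for you to match: Theorem~\ref{t=ACW} is quoted from \cite{ACW}, and the introduction only summarizes that argument (positive entropy gives some nonzero exponents, $C^1$ perturbations are then used to \emph{eliminate all zero exponents} on large parts of the phase space, and a Baire argument concludes), explicitly adding that those techniques are essentially disjoint from the accessibility machinery developed here. Your plan, by contrast, has a genuine conceptual error at its endpoint: you ``upgrade'' the finest dominated splitting to a partially hyperbolic splitting $TM=E^s\oplus E^c\oplus E^u$ with $E^c$ carrying only zero exponents and call this the nonuniformly Anosov structure. That is precisely what nonuniformly Anosov is \emph{not}: as the paper recalls, the conclusion of Theorem~\ref{t=ACW} forces every Lyapunov exponent of a.e.\ point to be bounded away from $0$, so the whole difficulty is to kill the zero center exponents, not to retain them. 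The paper's remark that the exact conclusion fails for symplectomorphisms --- where partially hyperbolic non-Anosov maps have positive entropy with zero center exponents on open sets --- shows that your final step cannot be a soft perturbation upgrade; it is the heart of \cite{ACW} and is missing from your outline. Relatedly, your dichotomy is not the one claimed: semicontinuity of the $L_k$ plus the Bochi--Viana alternative at continuity points gives, index by index, ``equal exponents a.e.\ or dominated splitting over an invariant (not full-measure-forced) set,'' but it does not exclude the mixed situation in which zero and nonzero exponents coexist on sets of positive volume; ruling that out generically is exactly where the quoted theorem's work lies.

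The ergodicity half of your hyperbolic alternative is also unjustified at the stated regularity. The Pugh--Shub accessibility-implies-ergodicity criterion requires $C^{1+\alpha}$ (it rests on absolute continuity of the stable and unstable laminations, which fails for general $C^1$ diffeomorphisms), so ``accessibility $+$ weak center bunching'' cannot be invoked for a $C^1$-generic volume-preserving map; the $C^1$ results of this type that do exist (\cite{ABW}) are for partially hyperbolic symplectomorphisms and use nonuniform center bunching, not the Pugh--Shub theorem. Moreover, in the nonuniformly Anosov alternative there is no uniform partially hyperbolic structure with nontrivial $E^u,E^s$ and a distinguished center on all of $M$ to which the \cite{DW}-type accessibility theorem (or Theorem~B of this paper) applies; the invariant-set generalization proved here is an ingredient of Theorem~A (the symplectic case, via \cite{Bsymp}, \cite{ABC}, \cite{ABW}), not of Theorem~\ref{t=ACW}. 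To repair your write-up you would need (i) an argument removing zero exponents generically in the presence of some nonzero exponents, and (ii) a $C^1$-compatible ergodicity mechanism on the nonuniformly Anosov side, both of which are supplied in \cite{ACW} by different means than those you propose.
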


The ``nonuniformly Anosov" condition in Theorem~\ref{t=ACW} implies in particular that there is a constant $c>0$ such that for almost every  $x\in M$ and every  $0 \neq v \in T_xM$, either
\[\hbox{$\lim_{n\to\pm \infty} \frac{1}{n} \log\| D_x f^nv\| > c$, or
$\lim_{n\to\pm \infty} \frac{1}{n} \log\| D_x f^nv\| <-c$}.\]  
The nonuniformity in this conclusion cannot be removed:  in dimension greater than $2$, there are $C^1$-open sets of volume-preserving diffeomorphisms with positive entropy that are not Anosov.  These include, but are not limited to, the partially hyperbolic diffeomorphisms (we define the Anosov condition and partial hyperbolicity below).

Theorem~\ref{t=ACW} can be rephrased using Ruelle's inequality, which implies that for a volume-preserving diffeomorphism, equation (\ref{e=allvanish}) holds for  almost every $x\in M$ and every nonzero $v\in T_xM$ if and only if the volume entropy $h_{m}(f)$ vanishes.  Thus Theorem~\ref{t=ACW} implies that {\em  $C^1$-generically among volume-preserving diffeomorphisms, positive volume entropy (i.e.~$h_{m}(f)>0$) implies ergodicity.}

\subsection*{Ergodicity of symplectomorphisms}
The focus of Ma\~n\'e's discussion in \cite{M} was in fact the $C^1$-generic behavior of {\em symplectomorphisms}, which in dimension $2$ coincide with the area-preserving diffeomorphisms.  If $f\colon M^{2n}\to M^{2n}$ preserves a symplectic form $\omega$, then it preserves the normalized volume $m$ induced by the  form $\omega^n$.  The question of whether $f$ is typically ergodic with respect to this volume has a long history going back to the ergodic hypothesis for Hamiltonian systems.  

For symplectomorphisms, the exact conclusion of Theorem~\ref{t=ACW} does not hold; in particular, $C^1$-generically among the {\em partially hyperbolic} symplectomorphisms that are not Anosov, one has positive entropy {\em without the nonuniformly Anosov condition}.  On the other hand, the $C^1$-generic partially hyperbolic symplectomorphism is ergodic \cite{ABW}.  This leaves the natural question: {\em for the  $C^1$-generic symplectomorphism, does positive volume entropy imply partial hyperbolicity, and hence ergodicity?}  
 
In the same address \cite{M}, Ma\~n\'e announced that for the $C^1$-generic symplectomorphism, positive volume entropy implies the existence of a partially hyperbolic invariant set of positive volume.  A proof of this claim, requiring substantially new techniques, was provided nearly 20 years later by Bochi \cite{Bsymp}.    In this paper, we take the Bochi result as a starting point to prove the full generalization of the Ma\~n\'e-Bochi theorem to symplectomorphisms: 
 
\begin{nThm*}{{\bf Theorem A.}} $C^1$ generically among the symplectomorphisms of a  compact, connected symplectic manifold $(M,\omega)$, positive volume entropy implies partial hyperbolicity and ergodicity.
\end{nThm*}

Note that there are obstructions to partial hyperbolicity on certain symplectic manifolds (see \cite{Leonidstudent} for a discussion); for example $\CC \PP^n$ does not carry a partially hyperbolic symplectomorphism.  For these manifolds, Theorem A implies that the $C^1$ generic symplectomorphism has volume entropy 0.  We also remark that the assumption ``positive volume entropy" cannot be replaced by ``positive topological entropy:" on any symplectic manifold there exist symplectic horseshoes with positive topological entropy.  These horseshoes persist  under $C^1$-small perturbation.

Needless to say, the techniques behind the proof of Theorem A are essentially disjoint from those in the volume-preserving setting of Theorem~\ref{t=ACW}.
In the volume-preserving setting, the positive entropy condition implies the existence of nonzero Lyapunov exponents on the phase space, and the proof in \cite{ACW} harnesses the presence of some nonzero exponents to eliminate all zero Lyapunov exponents
throughout large parts of the phase space.  A Baire argument completes the proof. In the symplectic setting, we prove that $C^1$ generically, the partially hyperbolic set provided by \cite{Bsymp} in the presence of positive entropy is the entire manifold.  The main result in \cite{ABW} then gives the conclusion.  We now explain this argument in more detail.

\subsection*{Partial hyperbolicity and accessibility}

Let $f\colon M\to M$ be a diffeomorphism.  A compact, $f-$invariant set $\Lambda\subseteq M$
is {\em partially hyperbolic} if there exists $N\geq 1$ and a $Df$-invariant splitting of the tangent bundle over $\Lambda$:
\begin{equation} \label{e=splitting} T_\Lambda M = E^u \oplus E^c\oplus E^s,
\end{equation}
such that for every $x\in \Lambda$ and all unit vectors $v^u\in  E^u_x, v^c\in E^c_x$, and $v^s\in E^s_x$, we have
\[ \|D_xf^{N}(v^{s})\| \leq \frac12 \|D_xf^{N}(v^{c})\| \leq  \frac14 \|D_xf^{N}(v^{u})\|,
\]
and
\[
\max\{\|D_xf^{N}(v^{s})\|,  \|D_xf^{-N}(v^{u})\|\} < \frac12.
\]
We assume throughout that the bundles $E^u$ and $E^s$ in the splitting (\ref{e=splitting}) are nontrivial.  This partially hyperbolic splitting is always continuous.

A diffeomorphism of a closed manifold $M$ is {\em partially hyperbolic} if $M$ is a partially hyperbolic set for $f$, and {\em Anosov} if it is partially hyperbolic, with $E^c=\{0\}$.

Let $\Lambda$ be a compact partially hyperbolic set for $f$.  Through each $x\in \Lambda$ are unique local stable and unstable manifolds $\cW^s_f(x,\hbox{loc})$ and $\cW^u_f(x,\hbox{loc})$, respectively, which are given by a graph transform argument in a suitable neighborhood of $\Lambda$.   The local stable and unstable manifolds determine global manifolds by
\[\cW^u_f(x) = \bigcup_{n\geq 0} f^{n}(\cW^u_f(f^{-n}(x),\hbox{loc}),\quad\hbox{and } \cW^s_f(x) = \bigcup_{n\geq 0} f^{-n}(\cW^s_f(f^{n}(x),\hbox{loc}).
\]

We say that $\Lambda$ is {\em $u$-saturated} if for any $x\in \Lambda$, $\cW^u_f(x)\subset \Lambda$ and  {\em$s$-saturated} if  for any $x\in \Lambda$, $\cW^s_f(x)\subset \Lambda$.  We say that $\Lambda$ is {\em bisaturated} if it is both $s$- and  $u$-saturated.
The bisaturated set $\Lambda$ is {\em accessible} if for every $p,q \in \Lambda$ there is an {\em $su$-path}  for
$f$  in $\Lambda$ -- that is, a piecewise $C^1$ path such that every segment is contained in a single leaf of $\W^s_f$ or a single leaf of $\W^u_f$ -- from $p$ to $q$.

Note that if $f$ is partially hyperbolic, then $M$ is automatically bisaturated.
In this case $f$ is accessible if for every $p,q \in M$ there is an {\em $su$-path} from $p$ to $q$.
Dolgopyat and Wilkinson proved in \cite{DW} that accessibility holds for a $C^1$ open and dense set of partially hyperbolic diffeomorphisms, volume-preserving diffeomorphisms, and symplectomorphisms of a closed, connected manifold $M$.

Our main result is a local version of the main result in~\cite{DW} that implies both Theorem A and the results of \cite {DW}.\footnote{Our proof of  Theorem B also corrects some omissions in the proof in \cite{DW}.
We will indicate where.}
Let us denote by $\Diff^k(M)$ the space of $C^k$ diffeomorphisms endowed with the $C^k$ topology.
If $m$ is a volume form on $M$, we denote by $\Diff^k_m(M)$ the subspace of $C^k$ diffeomorphisms that preserve $m$;
if $(M^{2n},\omega)$ is a symplectic manifold, we denote by $\Symp^k(M,\omega)$ the subspace of $C^k$ symplectomorphisms.

\begin{nThm*}{{\bf Theorem B.}}\label{t.perturbation}
Let $M$ be a closed manifold, let  $\Lambda$ be a partially hyperbolic set of a diffeomorphism $f\colon M\to M$, and let $\cU$ be a neighborhood of $f$ in $\Diff^1(M)$.
There exists a neighborhood $U$ of $\Lambda$ and a non-empty open set $\cO\subset \cU$ such that:
for any $g\in \cO$, any bisaturated partially hyperbolic set $\Delta \subset U$ for $g$ has non-empty interior and is accessible.

The same result holds in $\Diff^1_m(M)$ and in $\Symp^1(M,\omega)$, if $(M^{2n}, \omega)$ is a symplectic manifold.
\end{nThm*}

Using Theorem~B, we give a proof of Theorem~A.

\begin{proof}[Proof of Theorem A] Let $(M,\omega)$ be a closed symplectic manifold.  Bochi proved \cite{Bsymp} that there are two disjoint open sets, $\cZ$ and $\cP$ in $\Symp^1(M,\omega)$, such that
\begin{itemize}
\item[--] $\cZ\cup \cP$ is dense in $\Symp^1(M,\omega)$;
\item[--] for $f$ in a residual subset of $\cZ$, the volume entropy $h_m(f)$ is zero;
\item[--] for $f\in \cP$, there exists a positive volume, partially hyperbolic $f$- invariant set $\Delta_f$.
\end{itemize}
The openness of $\cP$ follows from\footnote{\cite{AB} is written in the volume-preserving case, but remains true restricted to any closed (or Baire) subspace of the space of volume preserving maps. In particular it holds for symplectomorphisms.} Theorem C in \cite{AB}, which also implies that
the set $\cR$ of continuity points of $f\mapsto \Delta_f$ is residual in $\cP$. Because the set $\Delta_f$ has positive volume, it is  bisaturated: see \cite[Corollary B]{Zhang}.

We  consider the diffeomorphisms in $\cP$.
Let $f\in \cR\subset \cP$.  Theorem B  implies that
there exists an open set $\cU\subset \cP$ containing $f$ in its closure such that
for $g\in \cU$, the set $\Delta_g$ has nonempty interior.
Theorem 1 in   \cite{ABC} implies that $g$ is also transitive when it belongs to a residual subset of $\cU$, implying that $\Delta_g=M$, and so
$M$ is partially hyperbolic.  Since partial hyperbolicity is robust,
we have thus shown that for $g$ in an open and dense subset of $\cP$, the whole manifold is a partially hyperbolic set, i.e. $g$ is partially hyperbolic.

Theorem A in \cite{ABW}  states that among the $C^1$, partially hyperbolic symplectomorphisms, ergodicity is $C^1$-generic, completing the proof.
\end{proof}

The  boundary of a bisaturated partially hyperbolic set is also bisaturated (see  Lemma~\ref{l=bisaturatedboundary} in Section~\ref{ss=bisaturated}).
When $M$ is connected, this has the following consequence, which generalizes the main result in \cite{DW}.

\begin{nThm*}{{\bf Corollary C.}}  Let $M$ be a closed, connected manifold, let  $\Lambda$ be a partially hyperbolic set of a diffeomorphism $f\colon M\to M$, and let $\cU$ be a neighborhood of $f$ in $\Diff^1(M)$.

There exists a neighborhood $U$ of $\Lambda$ and a non-empty open set $\cO\subset \cU$ such that for any $g\in \cO$, there is no proper bisaturated subset of $U$.

In particular, if $f$ is partially hyperbolic, then there exists a nonempty $\cO\subset \cU$ such that every $g\in \cO$ is accessible.
\end{nThm*}

\begin{proof} Let $\Lambda$ be a partially hyperbolic set for $f$, and let $\cU$ be a neighborhood of $f$ in $\Diff^1(M)$.  Applying Theorem B, we obtain a neighborhood $U$ of $\Lambda$ and a non-empty open set $\cO\subset \cU$ such that for $g\in \cO$, any bisaturated set $\Delta$ for $g$ in $U$ has empty interior and is accessible.   

Thus, if $U$ contained a proper bisaturated set for a diffeomorphism $g\in \cO$, then its boundary would be a bisaturated set with empty interior, a contradiction.

If $f$ is partially hyperbolic, then applying this argument to $\Lambda=M$ gives that any $g\in \cO$ is accessible.
\end{proof}

 We remark that in the dissipative setting an earlier version of Corollary C was proved  for bi-Lyapunov homoclinic classes by Abdenur-Bonatti-Diaz \cite{ABD}.  Corollary C has the following direct corollary.

\begin{nThm*}{{\bf Corollary D.}} Let $M$ be closed and connected.  Then the $C^1$-generic $f$ in $\Diff^1(M)$  has no proper, partially hyperbolic, bisaturated invariant compact set.
\end{nThm*}

\begin{proof}  Let $\cB$ be a countable basis for the topology on $M$ (not including $M$ itself). 
For $U$ in $\cB$, let
$\cC_U$ be the set of diffeomorphisms $f\in \Diff^1(M)$
whose maximal invariant set in $\overline U$ is partially hyperbolic and let
$\cD_U = \Diff^1(M)\setminus \overline{\cC_U}$.  Clearly $\cC_U\cup \cD_U$ is open and dense in $\Diff^1(M)$.

By Corollary C, there exists a dense open subset $\cG_U\subset \cC_U$ such that $U$ does not contain any proper bisaturated set.
Now let
\[\cR := \bigcap_{U\in \cB}\left(\cC_U \cup \cG_U\right).\]
The set $\cR$ is residual in $\Diff^1(M)$, and $g\in \cR$ implies that $g$ has no proper bisaturated partially hyperbolic subsets.
\end{proof}

Another application of our results is to the Gibbs $su$-states of a partially hyperbolic diffeomorphism.
Let $f$ be partially hyperbolic.  Recall that a {\em Gibbs $u$-state (resp. $s$-state)} is an $f$-invariant probability measure $\mu$ such that the disintegration of $\mu$ along leaves of the $\cW^u$ (resp. $\cW^s$) foliation is absolutely continuous with respect to volume on $\cW^u$ (resp. $\cW^s$) leaves.  A {\em Gibbs $su$-state} is  an $f$-invariant probability measure that is both a Gibbs $u$-state  and $s$-state.

\begin{nThm*}{{\bf Corollary E.}} Let $f$ be partially hyperbolic, and let $\cU$ be a neighborhood of $f$ in $\Diff^1(M)$.  Then there exists   a non-empty open set $\cO\subset \cU$ such that for every $g\in \cO$, if $\mu$ is a  Gibbs $su$-state for $f$, then $\mu$ has full support, i.e., $\mathrm{supp}(\mu) = M$.
\end{nThm*}

\begin{proof}
The Corollary C implies that there exists a non-empty open set $\cO\subset \cU$ such that every $g\in \cO$ is accessible.
Continuity of the foliations $\cW^u$ and $\cW^s$ implies that the support of a Gibbs $su$-state is bisaturated. 
\end{proof}

The proof of Theorem B follows the lines of the proof of the main result in \cite{DW}, with necessary modifications in the absence of a global partially hyperbolic structure.

\subsection*{Discussion about stable ergodicity}

For $k\leq r$, we say that a diffeomorphism $f$ is {\em $C^k$-stably ergodic in $\Symp^r(M,\omega)$} if
any diffeomorphism $g$ that is $C^k$-close to $f$ in $\Symp^r(M,\omega)$ is ergodic.
Since we do not know examples of stably ergodic diffeomorphisms in $\Symp^1(M,\omega)$,
a higher smoothness is usually required.
In~\cite{ACW2} we have proved that $C^1$-stable ergodicity is $C^1$-dense among partially hyperbolic diffeomorphisms in $\Diff^r_v(M)$ for any $r>1$
and an important step was Theorem~\ref{t=ACW} above.
One can ask if the same result holds in $\Symp^r(M,\omega)$:

\begin{question}
Is $C^1$-stable ergodicity $C^1$-dense among $\Symp^r(M,\omega)$, $r > 1$?
\end{question}

Again the strategy for addressing this question should be completely different from the volume-preserving case
since the ``non-uniform Anosov property" does not exist for symplectomorphisms.

\section{Notation and  outline of the proof of Theorem B}
Throughout, $M$ denotes a closed Riemannian manifold and $m$ denotes a smooth volume on $M$, normalized so that $m(M)=1$. When $M = M^{2n}$ is equipped with a symplectic structure $\omega$, we  will indicate  so. 
\subsection{Charts}
We introduce for each point $p\in M$ a chart
$$\varphi_p\colon B(0,1)\subset T_pM\to M,\quad \text{with } \varphi_p(0)=p$$
that has the following properties:
\begin{enumerate}
\item The map $p\mapsto \varphi_p$ is ``piecewise continuous in the $C ^1$ topology."\footnote{In \cite{DW} similar charts are constructed, but it is claimed that these can be defined on a fixed domain in $\RR^d$, depending continuously on $p\in M$. This continuity is not possible and also unnecessary.}
More precisely, there exist open sets $U_1,\dots U_\ell \subset M$ and:
\begin{itemize}
\item[--] compact sets $K_1,\dots,K_\ell$ covering $M$ with $K_i\subset U_i$,
\item[--] trivializations $\psi_i\colon U_i\times \mathbb{R}^d\to T_{U_i}M$
such that $\psi_i(\{p\}\times B(0,2))$ contains the unit ball in $T_pM$ for each $p\in U_i$, and
\item[--] smooth maps $\Phi_i\colon U_i\times B(0,2)\to M$,
\end{itemize}
such that each $p\in M$ belongs to some $K_i$, with
$$\varphi_p=\Phi_i\circ \psi_i^{-1} \text{ on } B(0,1)\subset T_pM.$$

\item When a volume or symplectic form has been fixed on $M$, it pulls back under $\varphi_p$
to a constant form on $T_pM$.
\end{enumerate}
\begin{remark}\label{r.transverse}
Given a compact set $X$ with a continuous splitting
$T_XM=E\oplus F$ and $\gamma>0$, one can choose a Riemannian metric on $M$
and the charts $\varphi_p$ such that for each $p\in \Lambda$ and $z\in \varphi_p^{-1}(X)\cap B(0,1)$,
the norm of the orthogonal projection of $D\varphi_p(z)^{-1}(F_{\varphi_p(z)})$ onto $D\varphi_p(z)^{-1}(E_{\varphi_p(z)})$
is less than $\gamma$.
\end{remark}
\medskip

The construction can be done as follows.
Assume that a Riemannian metric on $M$ has been fixed.
We first choose a cover of $TM$ by trivializations
$\psi_i\colon U_i\times \mathbb{R}^d\to T_{U_i}M$
such that $\psi_i(p,0)=0\in T_pM$. The maps $u\mapsto \psi(p,u)$ can be chosen close to  isometries.

Denoting by $B^d(0,R)$ the standard open ball in $\mathbb{R}^d$ with radius $R$,
we then construct finitely many charts
$$\Phi\colon B^d(0,3)\to M$$
such that (after replacing the $U_i$ by smaller open sets, if necessary)
for any $U_i$, there exists one such chart $\Phi$ satisfying $U_i\subset \Phi(B(0,1))$.
We can thus define
$$\Phi_i(p,u):=\Phi(\Phi^{-1}(p)+u) \text{ on }  U_i\times B(0,2),$$
and for each $p\in M$, choose $K_i$ containing $p$ and set $\varphi_p=\Phi_i\circ \psi_i^{-1}$ on $B(0,1)\subset T_pM$.

When $M$ is equipped with a volume form, one can require (by Moser's theorem~\cite{Moser})
that $\varphi$ sends divergence-free vector fields to divergence-free vector fields.
When $M$ is equipped with a symplectic form $\omega$
one can require (by Darboux's theorem) that $\varphi_*\omega$ coincides (up to multiplication by a constant)
with the standard symplectic form $\Sigma dp_i\wedge dq_i$ of $\mathbb{R}^d=\mathbb{R}^{2n}$.
This concludes the construction of the charts.

Given a compact set $K$ with a continuous splitting $T_KM=E\oplus F$, one can first choose
a Riemannian metric such that the norm of the orthogonal projection from $F$ to $E$ is arbitrarily small.
One then chooses the charts $\Phi$ in such a way that the bundles $E$ and $F$ lifted in $B^d(0,3)$
are close to constant bundles (this is possible by the continuity of $E$ and $F$).
Since the $\psi_i$ are close to isometries, this shows that for $p\in \Lambda$,
the bundles $E$ and $F$ lifted by $\varphi_p$ in $B(0,1)\subset T_pM$
are close to $E_p$ and $F_p$ respectively, which are close to orthogonal.
This gives the property stated in Remark~\ref{r.transverse}.

\subsection{Conefields}
A \emph{$k$-conefield} $\cC$ over a subset $U\subset M$ is
a subset of the tangent bundle $T_{U}M$ satisfying:
\begin{itemize}
\item[--] $tv\in \cC$ for any $v\in \cC$ and $t\in \RR$; and
\item[--] there is a continuous subbundle $E\subset T_UM$ with $k$-dimensional fibers
such that $\{v\in \cC: \|v\|=1\}$ is a neighborhood of $\{v\in E: \|v\|=1\}$.
\end{itemize}
We denote $\cC(x):=\cC\cap T_xM$.
The conefield $\cC$ is \emph{invariant} under a diffeomorphism $f$ if for any $x\in U\cap f^{-1}(U)$,
the image of $\overline{\cC(x)}$ is contained in ${\rm Interior}(\cC(f(x)))\cup \{0\}$.

A conefield $\cC'$ over $U$ is a {\em $\delta$-perturbation of $\cC$ with support in $V\subset U$} if there exists
a diffeomorphism $h$ that is $\delta$-close to the identity in the $C^1$ topology
such that $h(U)=U$, $h$ coincides with the identity on $U\setminus V$ and
$h^\ast(\cC)=\cC'$.
A $k$-conefield $\mathcal{C}$ is {\em $\delta$-close} to a subbundle $E$ of $T_UM$ with $k$-dimensional fibers
if $\{v\in \cC: \|v\|=1\}$ is $\delta$-close to $\{v\in E: \|v\|=1\}$ in the Hausdorff distance.
\medskip

Let $f\colon M\to M$ be a diffeomorphism, and let $\Lambda$ be a compact $f$-invariant set with a partially hyperbolic splitting $T_\Lambda M = E^u\oplus E^c\oplus E^s$.  

A neighborhood $U$ of $\Lambda$ is {\em admissible} if there exist continuous conefields $\C^u, \C^s, \C^{cu}, \C^{cs}$  over $U$ containing  $E^u, E^s, E^{cu}, E^{cs}$ on $T_\Lambda M$ with the appropriate invariance, transversality and contraction properties. The following proposition is standard.  

\begin{proposition}\label{p=conefields} For every partially hyperbolic set $\Lambda$ for $f$, there exist neighborhoods $\cU_0$ and $U_0$ of $\Lambda$ and $f$,
and conefields  $\C^u_0, \C^s_0, \C^{cu}_0, \C^{cs}_0$ on $U_0$ with the following property.
If $\Delta\subset U_0$  is a  compact $g$-invariant set for $g\in \cU_0$, then it is partially hyperbolic and $U_0$ is an admissible neighborhood of $\Delta$ with respect the conefields  $\C^u_0, \C^s_0, \C^{cu}_0, \C^{cs}_0$ and the diffeomorphism $g$.
\end{proposition}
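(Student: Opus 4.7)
The plan is the standard cone-criterion argument for partial hyperbolicity, adapted so that the conefields are fixed once and for all on a neighborhood of $\Lambda$ and work uniformly for every nearby diffeomorphism and every invariant subset.

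First, I would extend the four continuous bundles $E^u, E^s, E^{cu}, E^{cs}$ from $\Lambda$ to continuous (not necessarily invariant) distributions $\widetilde E^u, \widetilde E^s, \widetilde E^{cu}, \widetilde E^{cs}$ on a small tubular neighborhood $V$ of $\Lambda$; this is possible because each $E^\sigma$ is a continuous subbundle of $T_\Lambda M$ and $\Lambda$ is compact. By passing to the iterate $f^N$ supplied by the definition of partial hyperbolicity, the inequalities
\[
\|D_xf^{N}(v^{s})\| \leq \tfrac12 \|D_xf^{N}(v^{c})\| \leq  \tfrac14 \|D_xf^{N}(v^{u})\|, \qquad \max\{\|D_xf^N(v^s)\|,\|D_xf^{-N}(v^u)\|\}<\tfrac12,
\]
hold for $x\in \Lambda$ and unit vectors $v^\sigma\in E^\sigma_x$. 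Choose $\delta>0$ small and take $\cC^u_0, \cC^s_0, \cC^{cu}_0, \cC^{cs}_0$ to be the conefields of opening $\delta$ around $\widetilde E^u, \widetilde E^s, \widetilde E^{cu}, \widetilde E^{cs}$ over $V$. By uniform continuity of $Df^N$ and $Df^{-N}$ on $\Lambda$ together with the strict inequalities above, one can decrease $\delta$ and shrink $V$ to a neighborhood $U_0\Subset V$ to arrange that whenever $x, f(x), \dots, f^N(x)$ all lie in $U_0$, the differential $D_xf^N$ maps $\overline{\cC^u_0(x)}\setminus\{0\}$ and $\overline{\cC^{cu}_0(x)}\setminus\{0\}$ into the interiors of $\cC^u_0(f^N(x))$ and $\cC^{cu}_0(f^N(x))$ with the analogous expansion bounds, and symmetrically for $\cC^s_0, \cC^{cs}_0$ under $D_xf^{-N}$.

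Next I would take $\cU_0$ a $C^1$-neighborhood of $f$ in $\Diff^1(M)$ small enough that the same cone conditions persist when $f$ is replaced by any $g\in \cU_0$ (with slightly relaxed, but still strict, constants); this is immediate from the continuity of $(g,x)\mapsto D_xg^N$ in the $C^1$ topology on a compact set of $x$ and from the fact that the cone conditions are open. Given $g\in \cU_0$ and a compact $g$-invariant $\Delta\subset U_0$, every point of $\Delta$ has its full $g$-orbit in $U_0$, so the cone conditions hold along the whole orbit. The classical cone criterion (iterating forward the invariant unstable and center-unstable cone families and backward the stable and center-stable ones, then intersecting them) produces a continuous $Dg$-invariant splitting $T_\Delta M = E^u_g\oplus E^c_g\oplus E^s_g$ contained in the respective cones, with the required contraction/expansion bounds under $g^N$; this shows $\Delta$ is partially hyperbolic and that $U_0$ is admissible for $\Delta$ with respect to the fixed conefields and $g$.

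The only real step requiring care is the simultaneous calibration in the second paragraph: $\delta$, $U_0$, and $\cU_0$ must be chosen in the correct order so that the strict cone estimates established on $\Lambda$ for $f$ survive the three perturbations (opening the cones by $\delta$, moving the base point within $U_0$, and replacing $f$ by $g\in\cU_0$) with uniform constants. Once this calibration is done, the existence of the invariant splitting and its bounds follow from the standard intersection-of-nested-cones argument, so I do not expect any further obstacle.
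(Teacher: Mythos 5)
Your proposal is correct: it is precisely the standard cone-criterion argument (extend the bundles continuously off $\Lambda$, take cones of small opening, calibrate $\delta$, $U_0$, and $\cU_0$ so the strict cone invariance and contraction/expansion estimates persist, then intersect nested cone images along $g$-orbits in $\Delta$ to recover the splitting). The paper itself offers no proof, stating only that the proposition is standard, so your argument supplies exactly the routine verification the authors had in mind.
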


\subsection{Bi-saturated partially hyperbolic sets. Accessibility}\label{ss=bisaturated}

Consider a diffeomorphism $f$ and a partially hyperbolic set $\Lambda$.
We denote by $U_0(f,\Lambda)$ and $\cU_0(f,\Lambda)$ the neighborhoods given by Proposition~\ref{p=conefields}.

Let $M$ be a manifold of dimension $d\geq 2$, and let $K$ be a compact subset of $M$.
 A {\em $k-$dimensional topological lamination}
 $\cL$ of $K$ is a decomposition of $K$ into path-connected subsets
\[K = \bigcup_{x\in K} \cL(x)
\]
called {\em leaves}, where $x\in \cL(x)$, and two  leaves $\cL(x)$ and $\cL(y)$ are either disjoint or equal,  and a covering of  $K$ by coordinate neighborhoods $\{U_\alpha\}$ with local coordinates $(x^1_\alpha,
\dots, x^d_\alpha)$ with the following property.  For $x\in U_\alpha\cap K$, denote by $\cL_{U_\alpha}(x)$ the connected component of $\cL(x)\cap U_\alpha$ containing $x$. Then in coordinates on $U_\alpha$ the local leaf $\cL_{U_\alpha}(x)$ is  given by a set of equations of the form $x^{k+1}_\alpha=\dots=x^d_\alpha= cst$. If the local coordinates $(x_{\alpha}^1,\dots, x_\alpha^d)$ can be chosen uniformly $C^r$ along the local leaves (i.e., to have uniformly $C^r$ overlaps on the sets  $x^{k+1}_\alpha=\dots=x^d_\alpha= cst$) then we say that $\cL$ {\em has $C^r$ leaves}. 

Note that the leaves of a lamination with $C^r$ leaves are $C^r$, injectively immersed submanifolds of $M$.   A lamination of $M$ is called a foliation.

Let $\Lambda$ be a bisaturated partially hyperbolic set for $f$.
 Continuity and invariance of the partially hyperbolic splitting implies that $\Lambda$ is $u-$ (resp., $s-$) saturated if and only if  $\{\cW^u_f(x) : x\in \Lambda\}$ (resp.,  $\{\cW^u_f(x) : x\in \Lambda\}$) is a lamination of $\Lambda$.

\begin{lemma}\label{l=bisaturatedboundary} Let $\Lambda$ be a bisaturated partially hyperbolic set for $f$.  Then the boundary $\partial \Lambda$ is also bisaturated.
\end{lemma}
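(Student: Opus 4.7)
The plan is to prove the stronger assertion that $\inter(\Lambda)$ is itself bisaturated; the lemma then follows immediately, because if $x\in\partial\Lambda$ and $\cW^u_f(x)$ met $\inter(\Lambda)$ at some point $y$, then $x\in\cW^u_f(y)\subset\inter(\Lambda)$ by $u$-saturation of $\inter(\Lambda)$, contradicting $x\in\partial\Lambda$.

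To show $\inter(\Lambda)$ is $u$-saturated, I would first establish the local version: $\cW^u_f(y,\hbox{loc})\subset\inter(\Lambda)$ for every $y\in\inter(\Lambda)$. Pick an open ball $V\subset\Lambda$ around $y$ and inside $V$ choose a small $C^1$ disc $T\ni y$ of dimension $\dim M-\dim E^u$ whose tangent space at $y$ lies close to $E^{cs}_y$. Every $q\in T$ lies in $\Lambda$, so by $u$-saturation of $\Lambda$ each plaque $\cW^u_f(q,\hbox{loc})$ is a $C^1$ disc contained in $\Lambda$. Consider
\[
V^{\ast}\;:=\;\bigcup_{q\in T}\cW^u_f(q,\hbox{loc}),
\]
together with the parameterization $F\colon T\times B^u(0,r)\to M$ sending $(q,v)$ to the point at unstable coordinate $v$ on the plaque through $q$. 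Continuous dependence of local unstable manifolds on the base point in $\Lambda$ makes $F$ continuous, and for $T$ and $r$ small $F$ is injective: distinct $q\in T$ lie on distinct unstable leaves because $T$ is transverse to $E^u$ while plaques are tangent to $E^u$, and distinct $v$ give distinct points on the same injectively immersed plaque. Since $\dim(T\times B^u(0,r))=\dim M$, invariance of domain forces $V^{\ast}$ to be open in $M$. By construction $V^{\ast}\subset\Lambda$, hence $V^{\ast}\subset\inter(\Lambda)$, and in particular $\cW^u_f(y,\hbox{loc})=F(\{y\}\times B^u(0,r))\subset\inter(\Lambda)$.

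The global statement then follows from $f$-invariance. Because $f$ is a homeomorphism of $M$ preserving $\Lambda$, it preserves $\inter(\Lambda)$, so for $y\in\inter(\Lambda)$ and each $n\geq 0$ we have $f^{-n}(y)\in\inter(\Lambda)$. The local version gives $\cW^u_f(f^{-n}(y),\hbox{loc})\subset\inter(\Lambda)$, and applying $f^n$ yields $f^n(\cW^u_f(f^{-n}(y),\hbox{loc}))\subset\inter(\Lambda)$. Using the formula $\cW^u_f(y)=\bigcup_{n\geq 0}f^n(\cW^u_f(f^{-n}(y),\hbox{loc}))$ recalled earlier, we conclude $\cW^u_f(y)\subset\inter(\Lambda)$. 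The symmetric argument with stable plaques handles $s$-saturation.

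The main obstacle is verifying the injectivity of $F$: the transversal $T$ must be chosen so that its tangent direction is genuinely transverse to \emph{every} nearby unstable plaque through a point of $\Lambda$, not merely to the single plaque through $y$. For this I would invoke the continuity of the partially hyperbolic splitting on the compact set $\Lambda$ and the uniform size and $C^1$ regularity of local unstable manifolds, so that after shrinking $T$ and $r$ all the relevant plaques are simultaneously graphs over a common $E^u$-direction in a suitable chart, and the invariance-of-domain step applies uniformly.
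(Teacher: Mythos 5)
Your argument is correct, and it reaches the conclusion by a more hands-on route than the paper. The paper's proof is two lines: bisaturation means $\Lambda$ is laminated by $\cW^s$ and by $\cW^u$ leaves, and for a compact lamination any two points on the same leaf have homeomorphic neighborhoods inside the lamination, so membership in the interior (of $\Lambda$ as a subset of $M$) is constant along leaves; the boundary is therefore leaf-saturated. You instead prove directly the statement that the paper's homogeneity principle encodes -- that $\inter(\Lambda)$ is itself $u$- and $s$-saturated -- by sweeping uniform local unstable plaques over a transversal $T$ contained in an open ball inside $\Lambda$, invoking invariance of domain to see that the swept set is open, and then globalizing along the leaf via $f$-invariance of $\inter(\Lambda)$ and the formula $\cW^u_f(y)=\bigcup_{n\ge 0}f^n(\cW^u_f(f^{-n}(y),\hbox{loc}))$. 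The two arguments rest on the same mechanism (local product structure of the lamination plus, implicitly or explicitly, invariance of domain), but yours is self-contained where the paper quotes a general fact about compact laminations, at the cost of having to verify the injectivity of your parameterization $F$. You correctly identify that as the only delicate point, and your resolution is the standard one: if two plaques $\cW^u_f(q,\hbox{loc})$, $\cW^u_f(q',\hbox{loc})$ with $q,q'\in T$ intersect, then $q'$ lies on a slightly enlarged local unstable plaque of $q$, which in a chart is a graph with small slope over $E^u_q$, while $T$ is a small disc nearly tangent to $E^{cs}_y$; transversality and the uniform continuity of the splitting on the compact set $\Lambda$ then force $q'=q$. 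One further small point worth making explicit: your swept plaques have some uniform radius $r$ possibly smaller than the standard local size, but the increasing union $\bigcup_n f^n$ of the radius-$r$ plaques at $f^{-n}(y)$ still exhausts the global leaf because backward iteration contracts distances along unstable leaves; with that remark the globalization step is complete.
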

\begin{proof} A  set is bisaturated if and only if it is  laminated  by $\cW^s$ leaves and by $\W^u$ leaves.
If $x,y$ belong to the same leaf of a compact lamination $\mathcal W\subset M$, then there exist neighborhoods $V_x$
and $V_y$ in $\mathcal W$, of $x$ and $y$ respectively, that are homeomorphic; thus $x$ belongs to the interior of $\mathcal W$
if and only if $y$ does.
\end{proof}

Let $\P(M)$ be the collection of all subsets of $M$.
We say that $(f,\Lambda)$ is {\em accessible on $X\in \P(M)$} if 
for every $p \in X\cap \Lambda$, and every $q\in X$, there is an $su$-path  for
$(f,\Lambda)$  from $p$ to $q$.   In particular, if $X\cap \Lambda\neq \emptyset$, and $f$ is accessible on $X$, then $\Lambda\supset X$.

We say $(f,\Lambda)$ is \emph{stably accessible} on $X\in \P(M)$ if there exists a neighborhood $U\subset U_0(f,\Lambda)$ of $\Lambda$ with $X\subset U$, and a neighborhood $\cU\subset \cU_0(f,\Lambda)$ of $f$ such that for every  $\widetilde f\in \cU$ and every $\widetilde f$-invariant bisaturated compact set $\widetilde \Lambda\subset U$,
we have that $(\widetilde f,\widetilde \Lambda)$ is  accessible on $X$.  

We say that a set  $X \in \P(M)$  is a {\em $c$-section for $(f,\Lambda)$}  if for every bisaturated subset $\Delta\subset \Lambda$, we have $X\cap\Delta\neq \emptyset$.

\subsection{Admissible families of disks}
Since we do not assume that $E^c$ is tangent to a foliation,
we will work with approximate center manifolds.

For $\rho>0$ small and $p\in \Lambda$, we denote by $B^c(0,\rho)$ the ball inside $E^c_p$ of radius $\rho$ 
and  set
\[V_\rho(p) := \varphi_p(B^c(0,\rho)).\]
We refer to $V_\rho(p)$
as a {\em $c$-admissible disk} (with respect to $(f,\Lambda)$)
with center $p$ and radius $\rho$
and write $r(V_\rho(p))=\rho$.
If $D$ is a $c$-admissible disk with center $p$ and radius
$\rho$, then for $\beta\in(0,1)$,
we denote by $\beta D$ the $c$-admissible
disk with center $p$ and  radius $\beta\rho$.
A {\em $c$-admissible family} (with respect to $(f,\Lambda)$) is a 
finite collection  of pairwise disjoint,
$c$-admissible disks.
\medskip

Define the return time
$R: \P(M) \to {\bf N} \cup\{\infty\}$
as follows. 
For $X\in \P(M)$, let  $R(X)$ be the smallest $J\in {\bf N}\cup\{\infty\}$ satisfying:
\begin{eqnarray}\label{e=Ndef}
f^i(X)\cap X\neq\emptyset, &\hbox{with}& |i|= J+1.
\end{eqnarray}
Note that $R(B_\rho(p))\to \hbox{per}(p)$, as $\rho\to 0$, where
we set $\hbox{per}(p) = \infty$ if $p$ is not periodic.

For ${\mathcal D}$ a  $c$-admissible family and
$\beta\in(0,1)$, we introduce the following notation:
\begin{eqnarray*}
\beta{\mathcal D} &=& \{\beta\Z\,\mid\, \Z\in{\mathcal D}\},\\
|{\mathcal D}| &=& \bigcup_{\Z\in{\mathcal D}} \Z,\\
r({\mathcal D}) &=& \sup_{\Z\in{\mathcal D}} r(\Z),\hbox{ and}\\
R({\mathcal D}) &=& R(|{\mathcal D}|).
\end{eqnarray*}

\subsection{Two propositions}

Our first proposition is the counterpart to  \cite[Lemma 1.1]{DW}.\footnote{While Lemma 1.1 is stated correctly in \cite{DW}, its proof has an error.  In particular, in Lemma 3.3, $r(\cD)$ and $R(\cD)$ are chosen after $\theta$ is given, when they should be chosen before.  A correct proof is given here.}

\begin{proposition}[Stable accessibility on center disks] \label{l=localaccess}
Let $\Lambda$ be partially hyperbolic for $f$, and let $\delta>0$ be given.
Then there exist $J\geq 1$ and a neighborhood $U$ of $\Lambda$ satisfying $\overline U\subset U_0(f,\Lambda)$ and the following property.

If ${\mathcal D}$ is a $c$-admissible family with respect to $(f,\Lambda)$
with  $r({\mathcal D})< J^{-1}$ and
$R({\mathcal D})>J$,  then for all 
$\sigma>0$  there exists $g\in \cU_0(f,\Lambda)$ such that:

\begin{enumerate}
\item $d_{C^1}(f,g)<\delta$,
\item $d_{C^0}(f,g)<\sigma$,
\item for each $D \in {\mathcal D}$, and every bisaturated partially hyperbolic set $\Delta\subset U$ for $g$, we have that
$(g,\Delta)$ is stably accessible on $D$.
\end{enumerate}
\end{proposition}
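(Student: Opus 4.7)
The plan is to follow the Dolgopyat--Wilkinson strategy of \cite{DW}, adapted to the local setting where we cannot rely on global partial hyperbolicity but only on the invariant conefields over $U_0(f,\Lambda)$ supplied by Proposition~\ref{p=conefields}. The goal is, for each $D\in\cD$, to build a $C^1$-small perturbation of $f$ supported near the tube $T_D:=\bigcup_{|i|\le J}f^i(D)$ that creates, at every point of a subdisk $\beta D$ (for a fixed $\beta\in(0,1)$), a finite family of $su$-loops for $g$ whose holonomy defects in the center direction span $E^c$ in a uniform and open way. The hypotheses on $\cD$ are precisely what allow the per-disk constructions to be combined into a single $g$: since $R(\cD)>J$ and the disks in $\cD$ are pairwise disjoint, the tubes $T_D$ for distinct $D\in\cD$ are disjoint, so the per-disk bumps sum to a $g\in\Diff^1(M)$ with $d_{C^1}(f,g)<\delta$; since $r(\cD)<J^{-1}$, each tube $T_D$ stays inside $U_0$. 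The $C^0$ smallness $d_{C^0}(f,g)<\sigma$ is obtained by further spreading the amplitude of each bump across its $J$ iterates. In the volume-preserving and symplectic cases, the bumps are realized as time-one maps of divergence-free or Hamiltonian vector fields read in the Moser/Darboux charts $\varphi_p$ from Section~2.1, so preservation of the form is automatic.

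The reason this argument transfers from the global setting of \cite{DW} to any bisaturated partially hyperbolic set $\Delta\subset U$ for $g$ is the elementary observation that every $su$-path for $g$ based at a point of $\Delta$ remains in $\Delta$: each leg is a whole $\cW^s_g$- or $\cW^u_g$-leaf through a point already in $\Delta$. Thus if $D\cap\Delta\neq\emptyset$, pick $p\in D\cap\Delta$; the $su$-loops constructed above with base $p$ lie entirely in $\Delta$. A standard implicit function theorem argument, exactly as in \cite{DW}, together with the spanning property of the defect vectors, produces a neighborhood of $p$ inside $D$ reachable from $p$ by such loops. Connectedness of $D$ and openness of the reachable set then yield accessibility on all of $D$. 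The \emph{stable} part is free: the defect vectors depend continuously on the underlying diffeomorphism in $C^1$, so the spanning property is open and persists for any $g'$ sufficiently $C^1$-close to $g$ and any bisaturated $\Delta'\subset U$ for $g'$.

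The main technical obstacle is the single-disk construction, which must simultaneously (i) be $C^1$-close to $f$ by $\delta$, (ii) have support inside the thin tube $T_D$, and (iii) produce defect vectors of uniform size spanning $E^c$ at every point of $\beta D$. The required balance is precisely what forces the asymmetric condition on $\cD$: the defect is distributed among $J$ separate bumps along the orbit, each of $C^1$ norm $O(\delta/J)$, with total contribution of order one. This is where the ordering correction flagged in the footnote matters: one must first fix $\delta$, then a spanning scale $\theta>0$ determining the minimal required defect, and only then choose $J=J(\delta,\theta,f,\Lambda)$; the hypotheses $r(\cD)<J^{-1}$ and $R(\cD)>J$ are imposed with respect to this $J$. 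Nothing beyond the conefield estimates on $U_0$ is needed, so the bump constructions of \cite{DW} transfer once one verifies that each bump's support and its $J$ iterates stay in $U_0$ and respect the conefields $\cC^u_0,\cC^s_0,\cC^{cu}_0,\cC^{cs}_0$.
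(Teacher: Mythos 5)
Your outline reproduces the Dolgopyat--Wilkinson mechanism at the level of slogans, but the step you delegate to ``a standard implicit function theorem argument, exactly as in \cite{DW}'' is precisely the step that fails in this setting and that the paper's proof is designed to avoid (see the footnote to Proposition~\ref{l=localaccess}). Here $\Delta$ is only a compact bisaturated set of a perturbation $g$: the stable/unstable leaves are defined through points of $\Delta$ only, the associated holonomies are merely continuous, there is no center foliation and no dynamical coherence, so you can neither force your $4$-legged loops to end on the $c$-admissible disk $D$ nor apply an implicit function theorem to conclude that the reachable set contains a neighborhood of $p$ \emph{inside} $D$. The paper bypasses this by never asking the loops to return to $D$: it proves accessibility on genuine balls $\varphi_{p_i}(B(z_i,\varepsilon))$ of $M$ via the $\theta$-accessibility criterion of Lemma~\ref{l.criterion} (continuous flows along the leaves restricted to $\Delta$, composed and fed into a Brouwer/degree argument), with the target balls deliberately displaced off the disk along the unstable direction (Lemma~\ref{c.cover0}), and only afterwards transfers accessibility back to $D$ using items (3a)--(3b) of that covering lemma. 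Without some such replacement, your argument has a genuine hole exactly where the original \cite{DW} proof does.

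Your quantitative scheme is also off, both in what it needs and in what the hypotheses on $\cD$ give. A family of $J$ bumps of $C^1$-size $O(\delta/J)$ supported at scale $r$ contributes at most $O(r\delta)$ to any holonomy displacement, so a ``total contribution of order one'' is unobtainable this way; moreover center defects transported along the orbit are distorted by $Df^i$, so they do not add coherently. None of this is needed: in the paper each elementary perturbation (Lemma~\ref{l.perturbation}) is supported at time zero near the disk and produces a defect $\theta\varepsilon$ \emph{proportional to the bump scale} $\varepsilon$, and Lemma~\ref{l.criterion} compensates by concatenating $O(\theta^{-1})$ loops. Relatedly: $r(\cD)<J^{-1}$ does not keep the tube $\bigcup_{|i|\le J}f^i(D)$ inside $U_0$ (iterated disks grow exponentially in $J$); $R(\cD)>J$ gives disjointness of $f^i(|\cD|)$ from $|\cD|$ only for $|i|\le J$, so your full-length tubes need not be pairwise disjoint; and since $J$ is fixed before $\sigma$, you cannot get $d_{C^0}(f,g)<\sigma$ by ``spreading across $J$ iterates''---in the paper it comes from shrinking the support scale $\varepsilon$. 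The actual role of $J$ is different: $U\subset\bigcap_{|k|\le J}f^k(U_0)$ and $R(\cD)>J$ ensure that the first $J$ iterates of $g$ and $f$ agree near the supports, so the invariant leaves of $g$ (and of any nearby $\widetilde g$, on any bisaturated $\widetilde\Delta\subset U$) are tangent to the contracted cones $Df^{\pm J}(\cC^{u/s}_0)$, which is what makes the holonomy estimates---and hence the \emph{stable} accessibility in item (3)---go through, rather than a soft ``continuity of defect vectors'' argument.
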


The second proposition is the counterpart to   \cite[Lemmas 1.2 and 1.3]{DW}.

\begin{proposition}[Stable $c$-sections exist]  \label{l=stablecsections}   Let $\Lambda$ be a  partially hyperbolic set for $f$. Then there exists $\delta>0$ with the following property.

Let $U$ be a neighborhood of $\Lambda$ satisfying $\overline U\subset U_0(f,\Lambda)$.  
For any $J\geq 1$ there exists a $c$-admissible disk family $\cD$  and $\sigma>0$ such that:
\begin{enumerate}
\item $r(\cD) < J^{-1}$,
\item $R({\mathcal D})>J$, and
\item if $g$ satisfies  $d_{C^1}(f,g)<\delta$ and $d_{C^0}(f,g)<\sigma$,
then for any bisaturated partially hyperbolic set $\Delta\subset U$ for $g$, the set $|\cD|$ is a $c$-section for $(g,\Delta)$.
\end{enumerate}
\end{proposition}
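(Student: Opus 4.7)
The plan is to construct a finite family $\cD = \{D_1, \ldots, D_n\}$ of pairwise disjoint, small $c$-admissible disks centered in $\Lambda$ whose local $su$-saturations cover a neighborhood of $\Lambda$ containing $U$, and whose $f$-iterates up to time $J$ are disjoint from $|\cD|$. First I fix $\delta > 0$ depending only on $(f, \Lambda)$, small enough that graph-transform stability of partial hyperbolicity gives: for $g$ with $d_{C^1}(f, g) < \delta$, any $g$-invariant partially hyperbolic set $\Delta \subset U_0(f, \Lambda)$ carries local $\cW^s_g, \cW^u_g$ leaves through each of its points that are uniformly $C^1$-close (in the charts $\varphi_p$) to the $\cW^s_f, \cW^u_f$ leaves through nearby points of $\Lambda$, and that depend continuously on $g$.

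Next, given $U$ and $J \geq 1$, I build $\cD$. Transversality of the partially hyperbolic splitting implies that for $p \in \Lambda$ and small $\rho, \tau > 0$, the local $su$-saturation
\[
W^{su}_{\rho, \tau}(p) = \bigcup \bigl\{\cW^u_f(y, \mathrm{loc}) : y \in \cW^s_f(z, \mathrm{loc}),\ z \in V_\rho(p)\bigr\},
\]
with $\cW^{s/u}$-plaques truncated to length $\tau$, is an open neighborhood of $p$ in $M$. I fix $\rho < J^{-1}$ small and $\tau$ uniform, and then by compactness of $\overline U$ choose finitely many base points $p_1, \ldots, p_n \in \Lambda$ with $\bigcup_i W^{su}_{\rho, \tau}(p_i) \supset U$. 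To also secure $R(\cD) > J$, I need $d(f^k(p_i), p_j) \geq 3\rho$ for all $i, j$ and all $1 \leq |k| \leq J + 1$; I obtain this by small perturbations of the $p_i$ within $\Lambda$ (and, where $\Lambda$ is locally too sparse, by sliding along the extension of $E^c$ to $U$ afforded by the center conefields of Proposition~\ref{p=conefields}), chosen to preserve the covering, and by then shrinking $\rho$ further.

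For the stability claim, I choose $\sigma > 0$ small enough that for any $g$ with $d_{C^1}(f, g) < \delta$ and $d_{C^0}(f, g) < \sigma$, the $g$-local $su$-saturations $W^{su,g}_{\rho, \tau}(D_i)$ remain Hausdorff-close to the $f$-versions and still cover $U$. Given a bisaturated partially hyperbolic set $\Delta \subset U$ for $g$ and a bisaturated subset $\Delta' \subset \Delta$, pick any $x \in \Delta'$: it lies in $W^{su,g}_{\rho, \tau}(D_i)$ for some $i$, so there is a $g$-$su$-path from $x$ to some $y \in D_i$. Since $\Delta'$ is bisaturated for $g$, the entire path lies in $\Delta'$, giving $y \in D_i \cap \Delta'$ and proving $|\cD|$ is a $c$-section for $(g, \Delta)$.

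The hardest step is the simultaneous enforcement of the covering and the short-return condition $R(\cD) > J$, particularly when $\Lambda$ contains dense low-period periodic orbits and hence offers little room to perturb the $p_i$ within $\Lambda$ without spoiling the coverage. The key is to use the local center geometry to slide centers in approximate-center directions, gaining separation of iterated orbits while preserving the $su$-coverage; balancing these requirements uniformly in $J$ is the central technical point.
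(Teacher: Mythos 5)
Your overall skeleton (small disjoint center disks with no short returns, plus an argument that any bisaturated set must hit them) matches the paper's, but two of your steps fail in this setting, and they are exactly the points where the paper's proof does real work. First, your ``local $su$-saturation'' $W^{su}_{\rho,\tau}(D_i)$ is not defined: the points of a $c$-admissible disk other than its center, and the intermediate points $y\in\cW^s(z,\mathrm{loc})$, need not belong to $\Lambda$ (nor to $\Lambda_U(g)$, nor to $\Delta$), and in the absence of a global partially hyperbolic structure there are simply no invariant manifolds through such points. Consequently neither the claim that these saturations are open neighborhoods covering $U$, nor the final step ``$x\in W^{su,g}_{\rho,\tau}(D_i)$ hence there is an $su$-path from $x$ to $D_i$ lying in $\Delta'$,'' can be run: the legs of that putative path would have to be leaves through points not known to carry any. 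The paper avoids this by arguing in the opposite direction (Claim~\ref{c2}): it starts from a point $p$ of the bisaturated set $\Delta$ near a center $q_i$, uses the $g$-leaves through points of $\Delta$ (which exist and, by bisaturation, stay in $\Delta$), and a Brouwer fixed point argument in the chart $\varphi_{q_i}$ to produce a two-legged $su$-path from $p$ into $V_\rho(q_i)$; no saturation of the disk itself, and no covering of the open set $U$, is needed --- only points of $\Delta$ must be reached.

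Second, the difficulty you correctly single out --- reconciling the covering with $R(\cD)>J$ near points of period $\leq J$ --- is not fixable by perturbing or ``sliding'' the centers. Centers of $c$-admissible disks must lie in $\Lambda$ by definition (and the companion Proposition~\ref{l=localaccess} uses this), so sliding along a center conefield off $\Lambda$ is not permitted; and $\Lambda$ may locally consist of nothing but a low-period orbit (e.g.\ an isolated partially hyperbolic fixed point), in which case no nonempty admissible family with $R(\cD)>J$ exists near it at all, let alone one whose saturations cover its neighborhood. The paper resolves this by \emph{not} covering the $\rho$-neighborhood $B_\rho$ of $\hbox{Per}_{\leq J}(f,U)$: in Lemma~\ref{l=csectionexistence} the centers $q_i$ are chosen in $\Lambda_U(f)\setminus B_\rho$ (which is how disjointness from the first $J$ iterates is obtained), and the low-period region is handled by the separate escape argument of Claim~\ref{c1}, based on $(\cW^s(p)\cup\cW^u(p))\cap \hbox{Per}_{\leq J}(f,U)=\{p\}$: any point of a bisaturated set inside $B_\rho$ is carried out of $B_\rho$ by a single leaf, and bisaturation then reduces to the previous case. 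Without some substitute for this escape mechanism your ``central technical point'' remains a genuine gap, not a balancing act that can be closed by choosing parameters more carefully.
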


\subsection{Proof of Theorem B from Propositions~\ref{l=localaccess} and \ref{l=stablecsections}}
Let $f$, $\Lambda$ and $\cU$  be given as in the statement of the theorem.   Let $\delta>0$ be given by Proposition~\ref{l=stablecsections}.  By shrinking the value of $\delta$ if necessary, we may assume that $d_{C^1}(f,g) < \delta$ implies $g\in \cU$.    

Let the neighborhood $U$  of $\Lambda$ and $J\geq 1$ be given by  Proposition~\ref{l=localaccess}, using the value $\delta/2$. We may assume that $\overline U\subset U_0(f,\Lambda)$.   Let $\cD$ and $\sigma$ be given by Proposition~\ref{l=stablecsections}.
Applying Proposition~\ref{l=localaccess} to $f$, $\Lambda$, $\delta/2$, $\cD$, $\sigma/2$ we associate a perturbation $g_0$ of $f$ satisfying:

\begin{enumerate}
\item $d_{C^1}(f,g_0)<\delta/2$,
\item $d_{C^0}(f,g_0)<\sigma/2$,
\item for each $D \in {\mathcal D}$, and every bisaturated partially hyperbolic set $\Delta\subset U$ for $g_0$, we have that
$(g_0,\Delta)$ is stably accessible on $D$.
\end{enumerate}
By compactness of the Hausdorff topology,
there exists a neighborhood $\cO\subset \cU$ of $g_0$
in the $\delta/2$-neighborhood of $g_0$ such that accessibility holds on each $D$,  for bisaturated sets of any $g\in \cO$.  Then for any $g\in \cO$, we have
$d_{C^1}(f,g)<\delta$ and $d_{C^0}(f,g)<\sigma$.  Let $\Delta\subset U$ be a bisaturated set for such a $g$.

On the one hand, Proposition~\ref{l=stablecsections} implies that $|\cD|$ is a $c$-section for $\Delta$, and so there exists $D\in\cD$ such that $\Delta\cap D\neq \emptyset$.  On the other hand, Proposition~\ref{l=localaccess} then implies that $\Delta\supset D$.  By saturating $D$ by local stable and unstable manifolds for $\Delta$ and using again the bisaturation of $\Delta$, we see that $\Delta$ has nonempty interior.

Consider any point $p\in \Delta$ and its accessibility class $C(p)$, i.e. the set of points $p'\in \Delta$
that can be connected to $p$ by a su-path in $\Delta$. Note that the closure of $C(p)$ is a bisaturated set and hence meets the $c$-section $|\cD|$ at a point $z$.
This point belongs to a disc $\varphi_{x_i}(B^c(0,\rho_i))\subset |\cD|$.
Any point $y$ close to $z$ can be joint by a su-path with two legs to a point in $\varphi_{x_i}(B^c(0,\rho_i))$:
this proves that $C(p)$ intersects $|\cD\|$ at a point $z_p$.
If $q$ is another point in $\Delta$, its accessibility class $C(q)$ meets $|\cD\|$ as well at a point $z_q$ and the stable accessibility relative to $|\cD\|$
implies that the two points $z_p,z_q$ can be connected by a su-path.
We have thus proved that $p$ and $q$ belong to the same accessibility class, hence that $\Lambda$ is accessible,
completing the proof of Theorem B. \endproof

\section{Proof of Proposition~\ref{l=localaccess}}

Fix $f,\Lambda,\delta$ as in the statement of Proposition~\ref{l=localaccess}.
We denote by $c$ the dimension of the center bundle.
The proof follows closely the proof of Lemma 1.1 in~\cite{DW}.
The main adaptation is that we work inside bisaturated sets $\Delta$ for $g$
in a small neighborhood of $\Lambda$ and consider unstable and stable holonomies in restriction to $\Delta$.
\medskip

The partially hyperbolic splitting for $f$ at a point $z$ will be denoted by
$E^u_z\oplus E^c_z\oplus E^s_z$, whereas the splitting for another diffeomorphism $g$
will be denoted by $E^u_{g,z}\oplus E^c_{g,z}\oplus E^s_{g,z}$.
As before $d=\dim(M)$.

For any $p\in M$ we have defined a chart $\varphi_p\colon B(0,1)\subset T_pM\to M$.
From Remark~\ref{r.transverse}, we can assume that
for any $p\in \Lambda$ and $z\in B(0,1)\cap \varphi_p^{-1}(\Lambda)$,
the orthogonal projections of $E^s_z$ on $E^u_z\oplus E^c_z$ and of $E^u_z$ on $E^c_z\oplus E^s_z$
have norms smaller than $10^{-1}$.

In the following, we will reduce the $C^1$-size $\delta$ of the perturbation, the size of the neighborhood $U$ of $\Lambda$,
and the size $\rho$ of the $c$-admissible discs.

\subsection{A center covering.}
We will need to replace $c$-admissible discs by families of disjoint smaller balls.

\begin{lemma}\label{c.cover0}
There exist $\delta_1,\rho_1>0$, $K>1$ and a neighborhood $U_1$ of $\Lambda$ such that
for any $\rho\in (0,\rho_1)$, any $c$-admissible disc $D$ with radius $\rho$, centered $p\in \Lambda$, and for any $\varepsilon\in (0,K^{-1}\rho)$, there exist $z_1,\dots, z_\ell\in T_pM$ such that:
\begin{itemize}
\item[(1)] The balls $B(z_i,100d^2\varepsilon)$ are in the $K\varepsilon$-neighborhood of $\varphi_p^{-1}(D)$.
\item[(2)] The balls $B(z_i,100d^2\varepsilon)$ are pairwise disjoint.
\item[(3)]For any $x\in D$, there exists $z_i$
such that for any $g$ that is $\delta_1$-close to $f$ in the $C^1$ distance and for any bisaturated set $\Delta\subset U_1$ for $g$:
\begin{itemize}
\item[(a)] if $x\in \Delta$ then there is a su-path for $g$ between $x$ and $\varphi_{p}(B(z_i,\varepsilon))$,
\item[(b)] if $\varphi_{p}(B(z_i,\varepsilon))\subset \Delta$, then any point $y\in \varphi_p(B(x,\varepsilon/2))$
belongs to an su-path that intersects $\varphi_{p}(B(z_i,\varepsilon))$.
\end{itemize}
\end{itemize}
\end{lemma}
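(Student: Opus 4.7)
The plan is to produce the $z_i$ as a sparsified packing of the disc $\varphi_p^{-1}(D) \subset E^c_p$, with each $z_i$ displaced slightly into $E^s_p \oplus E^u_p$. The offsets will be distributed by coloring a center grid so that the balls $B(z_i, 100d^2\varepsilon)$ become pairwise disjoint while each $z_i$ stays in the required $K\varepsilon$-tube around $\varphi_p^{-1}(D)$. Properties (3)(a),(b) will then follow from a local product structure argument for bisaturated $g$-invariant sets, using two-legged $su$-paths.

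Concretely, I will fix an orthonormal basis of $E^c_p$ and take as center grid the lattice $L = (\varepsilon/(10c))\ZZ^c \cap \overline{B^c(0,\rho)}$, so that every point of $B^c(0,\rho)$ is within $\varepsilon/10$ of some $y_\alpha \in L$. Using the $\ZZ^c$-coordinates modulo $N = \lceil 300 d^2 \rceil$, I partition $L$ into $N^c$ color classes so that any two $y_\alpha$ of the same color are at distance at least $N\varepsilon/(10c) > 200d^2\varepsilon$. For each color $k$ I pick a vector $v_k \in E^s_p \oplus E^u_p$ of norm at most $K\varepsilon$, with the $v_k$ pairwise $(200d^2\varepsilon)$-separated; a volume packing argument shows that $K = K(d)$ suffices because $\dim(E^s_p \oplus E^u_p) = d-c \ge 2$ (both $E^u$ and $E^s$ are nontrivial). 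Setting $z_\alpha := y_\alpha + v_{c(\alpha)}$, conditions (1) and (2) are then immediate: $|z_\alpha - y_\alpha| \le K\varepsilon$, and two distinct $z_\alpha, z_\beta$ differ by at least $200d^2\varepsilon$ either in their $E^c_p$ components (same color) or in their $E^s_p \oplus E^u_p$ components (different colors).

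For (3)(a), given $x \in D$ I choose $z_\alpha$ with $y_\alpha$ the $E^c_p$-nearest grid point to $\varphi_p^{-1}(x)$. The key input is a uniform local product structure: by Proposition~\ref{p=conefields} and Remark~\ref{r.transverse}, one can shrink $\delta_1$ and $U_1$ and choose the metric and charts so that for every $g$ with $d_{C^1}(f,g) < \delta_1$ and every bisaturated $\Delta \subset U_1$ for $g$, the lifted bundles $E^u_g, E^s_g, E^c_g$ over $\Delta$ are as close as desired to the constant bundles $E^u_p, E^s_p, E^c_p$ in the chart, and the local leaves $\cW^s_g(\cdot,\hbox{loc})$, $\cW^u_g(\cdot,\hbox{loc})$ are $C^1$-graphs of small slope over $E^s_p$ and $E^u_p$ respectively. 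Since $\Delta$ is bisaturated, every such local leaf through a point of $\Delta$ lies in $\Delta$, so two-legged $su$-paths starting from $x \in \Delta$ stay in $\Delta$. These paths sweep out a $(d-c)$-dimensional $C^1$ patch through $\varphi_p^{-1}(x)$ whose Hausdorff distance to the affine plane $\varphi_p^{-1}(x) + (E^s_p \oplus E^u_p)$ is of order $\gamma K\varepsilon$, where $\gamma$ is the transversality defect controlled by Remark~\ref{r.transverse}. Since $z_\alpha - \varphi_p^{-1}(x)$ has $E^c_p$-component of norm at most $\varepsilon/10$ and full norm at most $K\varepsilon$, this patch enters $B(z_\alpha, \varepsilon)$ as soon as $\varepsilon/10 + O(\gamma K\varepsilon) < \varepsilon$. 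Property (b) is proved symmetrically: beginning at a point of $\varphi_p(B(z_\alpha,\varepsilon)) \subset \Delta$ and running a two-legged $su$-path produces an analogous patch that covers $\varphi_p(B(x,\varepsilon/2))$, so every $y$ in this smaller ball lies on such a path.

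The main obstacle lies in making all these estimates uniform in $p \in \Lambda$, in the choice of $c$-admissible disc $D$ with radius $\rho \in (0,\rho_1)$, and in the pair $(g,\Delta)$. Uniformity of the transversality defect $\gamma$ over bisaturated $\Delta \subset U_1$ and $g \in \cU_0(f,\Lambda)$ close to $f$ will follow from continuity of the cones $\C^u_0, \C^s_0, \C^{cu}_0, \C^{cs}_0$ given by Proposition~\ref{p=conefields}, together with the freedom in Remark~\ref{r.transverse} to adapt the Riemannian metric and charts along $\Lambda$. Uniform $C^1$-control of the local leaves $\cW^s_g(\cdot,\hbox{loc}), \cW^u_g(\cdot,\hbox{loc})$ over such $(g,\Delta)$ is standard graph-transform material. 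Once these uniformities are in place, the constants $\delta_1, \rho_1, K$ and the neighborhood $U_1$ depend only on $f$ and $\Lambda$, not on the particular admissible disc.
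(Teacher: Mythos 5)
Your construction is at heart the same as the paper's: offset a finite cover of the center disc transversally so that the enlarged balls become disjoint, then connect by short $su$-paths using the fact that, in the charts, the stable and unstable leaves through points of a bisaturated set $\Delta$ are uniformly close to the linear model. The paper does this more economically: it displaces all centers along a single unit vector $v^u\in E^u_p$, by multiples $500d^2k_i\varepsilon$ with $k_i$ bounded by the overlap constant of the cover, so that a single unstable leg suffices in both (3)(a) and (3)(b); you displace in assorted directions of $E^s_p\oplus E^u_p$ and therefore need two-legged paths, which is workable but costs a larger $K$ and more bookkeeping. Your argument for (3)(a) is fine modulo constants.

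The genuine gap is in (3)(b). As written, you begin ``at a point of $\varphi_p(B(z_\alpha,\varepsilon))$'' and claim the resulting two-legged patch ``covers $\varphi_p(B(x,\varepsilon/2))$''. A patch swept from a single starting point is a union of stable plaques over an unstable plaque, hence has dimension $u+s=d-c$, and cannot cover the $d$-dimensional ball $B(x,\varepsilon/2)$; moreover its points are the only ones you can reach, since leaves exist only through points of $\Delta$. What (b) actually requires is surjectivity onto $B(x,\varepsilon/2)$ of the two-legged holonomy map defined on the whole $d$-dimensional ball $B(z_\alpha,\varepsilon)\subset \varphi_p^{-1}(\Delta)$ (move by approximately minus the $E^u$-component of $v_k$ along the unstable leaf of the starting point, then by approximately minus the $E^s$-component along the stable leaf of the point reached): this map is $C^0$-close to the translation by $-v_k$, and one concludes that its image contains $B(x,\varepsilon/2)$ by a Brouwer/degree argument. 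This is exactly the role played in the paper by the map $H(t,y)=\Phi^u_t(y)$ on $[-K\varepsilon,K\varepsilon]\times B(z_i,\varepsilon)$ and the tube $B(z_i,\varepsilon/2)+[-(K-1)\varepsilon,(K-1)\varepsilon]v^u$; it is also the only place where the hypothesis $\varphi_p(B(z_i,\varepsilon))\subset\Delta$ is used in full. The step is within reach of your framework, but it is missing, and the sentence as written would fail. Two smaller slips, fixable by adjusting constants: with lattice spacing $\varepsilon/(10c)$ and $N=\lceil 300d^2\rceil$, same-color centers are only $30d^2\varepsilon/c$ apart, not more than $200d^2\varepsilon$ (take $N$ of order $2000d^2c$); and since item (1) asks that the balls, not just the centers, lie in the $K\varepsilon$-neighborhood of $\varphi_p^{-1}(D)$, you should take $|v_k|\le (K-100d^2)\varepsilon$ or enlarge $K$ accordingly.
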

\begin{proof}
There exists $K_0>1$ such that for any $\varepsilon>0$,
the unit ball $B(0,1)\subset \RR^c$ can be covered by balls
$B(x_1,\varepsilon/4),\dots,B(x_\ell,\varepsilon/4)$ with the property that
any ball $B(x_i,200d^2\varepsilon)$ intersects at most $K_0-1$ others.

We introduce a local flow along the unstable leaves $W^u_g$ of $\Delta$ for $g$.
Fix  $p\in \Lambda$, and for $x\in B(0,\rho_1)\cap \varphi^{-1}_p(\Delta)\subset T_pM$, denote by $\pi^u_x$ the projection along $(E^u_p)^\perp$ of  $B(0,2\rho_1)$ onto the connected component of
$\varphi_p^{-1}(W^u_{g,\varphi_p(x)})\cap B(0,2\rho_1)$ containing $x$.

For each tangent vector $v^u\in E^u_p$, we define a vector field $X_{v^u}$ along the local leaves of $\varphi_p^{-1}(W^u_{g})$ as follows:
for $x\in B(0,\rho_1)\cap \varphi^{-1}_p(\Delta)\subset T_pM$,
 let 
\[X_{v^u}(x) = D\pi^u_x(x+v^u).\]
The vector field $X_{v^u}$ induces a local flow $\Phi^u$ on the set $B(0,2\rho_1)\cap \varphi^{-1}_p(\Delta)$, for   $|t|<\rho_1$:
the orbit of $x$ is the projection by $\pi^u_x$ of the curve $t\mapsto x+tv^u$.
The orbits are $C^1$ curves with a tangent space arbitrarily close to $\RR v^u$ if $\rho_1$,
$\delta_1$ and $U_1$ have been chosen small enough.

Let $D$ be a $c$-admissible disk centered at $p$, with radius $\rho<\rho_1$.
From the property above, one can choose points $x_1,\dots,x_\ell\in E^c_p$
such that the balls $B(x_1,\varepsilon/4),\dots,B(x_\ell,\varepsilon/4)$
cover $\varphi_p^{-1}(D)\subset E^c_p$ and choose integers $k_1,\dots,k_\ell$ in $\{1,\dots,K_0\}$
such that the balls
$B(z_1,100d^2\varepsilon)$, \dots, $B(z_\ell,100d^2\varepsilon)$,
centered at points $z_i:=x_i+500d^2{k_i}\varepsilon v^u$ are pairwise disjoint.
The two first items are satisfied with $K=1000d^2K_0$.

Since the flow lines under $\Phi^u$ are $C^1$-close to lines parallel to $v^u$,
for each point $x\in D\cap \Delta$, there exists $|t|<10K_0$ such that $\Phi_t(\varphi^{-1}_p(x))$
belongs to one of the balls $B(z_i,\varepsilon/2)$. Hence the unstable manifold of $x$ intersects
$\varphi_p(B(z_i,\varepsilon/2))$.

Conversely if $\varphi_p(B(z_i,\varepsilon))\subset \Delta$,
then the continuous map $H\colon (t,y)\mapsto \Phi^u_t(y)$ defined on
$[-K\varepsilon,K\varepsilon]\times B(z_i,\varepsilon)$ is $\varepsilon/2$-close
in the $C^0$ metric to the map
$(t,y)\mapsto y+tv^u$. Hence $B(z_i),\varepsilon/2)+ [-(K-1)\varepsilon,(K-1)\varepsilon]v^u$
is contained in the image of $H$. By construction $B(x,\varepsilon/2)$ is contained in this image.
We have thus proved that any point in $\varphi_p(B(x,\varepsilon/2))$
belongs to the unstable manifold of some point in $\varphi_p(B(z_i,\varepsilon))$.

The lemma is proved.
\end{proof}

\subsection{A center accessibility criterion.}\label{ss.theta}
Let $\theta>0$, $p\in\Lambda$ and $z\in T_pM$.

We say that the pair $(g,\Delta)$ is \emph{$\theta$-accessible} on the ball
$\varphi_p(B(z,2d\varepsilon))$ if there exist an orthonormal basis $w_1,\dots,w_c$ of $E^c_p$
and for each $j\in \{1,\dots,c\}$, a continuous map
$$H^j\colon [-1,1]\times [0,1]\times \varphi_p^{-1}(\Delta)\cap B(z,2d\varepsilon)\to \varphi_p^{-1}(\Delta)\cap B(0,2\rho)$$
such that for any $x\in \varphi_p^{-1}(\Delta)\cap B(z,2d\varepsilon)$ and $s\in [-1,1]$,
\begin{itemize}
\item[(a)] $H^j(s,0,x)=x$,
\item[(b)] the map $\varphi_p\circ H^j(s,.,x)\colon [0,1]\to \Delta$ is a  $4$-legged su-path, i.e. the concatenation of $4$ curves, each contained in a stable or unstable leaf,
%\item[(c)] the image of the map $\varphi_p^{-1}\circ H^j$ has diameter $<10d\varepsilon$,
\item[(c)] $\|H^j(s,1,x)-x\|< \frac \varepsilon {10d}$, and
\item[(d)] $\|H^j(\pm1,1,x)-(x\pm\theta \varepsilon w_j)\|< \theta \frac \varepsilon {10d}.$
\end{itemize}
\smallskip

The following replaces Lemma 3.2 in~\cite{DW} in our setting.\footnote{The proof of Lemma 3.2 in \cite{DW} contains a subtle error, in the sentence beginning: ``By a standard argument ..." The argument described there is indeed standard in the dynamically coherent setting, but it is not clear in the more general setting.  We bypass this argument here by removing the requirement that the $su$-paths end in the $c$-admissible disk, establishing local accessibility directly.}
\begin{lemma}\label{l.criterion}
For any $\theta>0$, there exist $\delta_2,\rho_2>0$ and a neighborhood $U_2$ of $\Lambda$
such that
\begin{itemize}
\item[--] for any $p\in \Lambda$, any $z$ in the ball $B(0,\rho_2)\subset T_pM$ and $\varepsilon\in (0,\rho_2)$,
\item[--] for any diffeomorphism $g$ which is $\delta_2$-close to $f$ in the $C^1$ topology,
\item[--] for any bi-saturated set $\Delta\subset U_2$ such that
$(g,\Delta)$ is $\theta$-accessible on the ball $\varphi_p(B(z,2d\varepsilon))$,
\end{itemize}
the pair $(g,\Delta)$ is accessible on $\varphi_p(B(z,\varepsilon))$.
\end{lemma}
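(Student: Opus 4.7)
The plan is to combine a Brouwer-type degree argument on compositions of the maps $H^j$ with the bisaturation of $\Delta$, so as to upgrade the $\theta$-accessibility hypothesis---which a priori provides only $c$-direction mobility of size $\theta\varepsilon$---into genuine local accessibility on $\varphi_p(B(z,\varepsilon))$. If $\Delta\cap\varphi_p(B(z,\varepsilon))=\emptyset$ the conclusion is vacuous, so I fix $x_0\in\varphi_p^{-1}(\Delta)\cap B(z,\varepsilon)$ and define the continuous map
\[
\Psi\colon [-1,1]^c\longrightarrow \varphi_p^{-1}(\Delta),\qquad \Psi(s_1,\dots,s_c)=H^c(s_c,1,\cdot)\circ\cdots\circ H^1(s_1,1,x_0).
\]
By (b), each $\Psi(s_1,\dots,s_c)$ is reached from $x_0$ by a $4c$-legged su-path inside $\Delta$, so every value of $\Psi$ lies in the accessibility class $\mathcal{A}(x_0)$ of $x_0$ in $\Delta$. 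Combining (a), (c) and (d), on the boundary $\partial[-1,1]^c$ the map $\Psi$ is uniformly close to the affine model $(s_1,\dots,s_c)\mapsto x_0+\theta\varepsilon\sum_j s_j w_j$.

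Next I would run the degree argument. Let $\pi\colon T_pM\to E^c_p$ denote the projection along $E^s_p\oplus E^u_p$; by Remark~\ref{r.transverse} this is close to the orthogonal projection once $U_2$ is small enough. By the boundary estimate above, $\pi\circ\Psi$ restricted to $\partial[-1,1]^c$ is homotopic, through maps avoiding any chosen point $y$ in a ball $B\subset E^c_p$ of radius proportional to $\theta\varepsilon$ centered at $\pi(x_0)$, to the corresponding restriction of the affine model. Since the affine model has Brouwer degree one on $B$, so does $\pi\circ\Psi$, and therefore $B\subset\pi(\Psi([-1,1]^c))$. Saturating the resulting center disk by local stable and unstable leaves of $\Delta$, which are transverse to $E^c$ by Remark~\ref{r.transverse}, we obtain a full open neighborhood of $x_0$ contained in $\mathcal{A}(x_0)$, whose $E^c_p$-extent is proportional to $\theta\varepsilon$.

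To cover all of $\varphi_p(B(z,\varepsilon))$ I would iterate. Pick a new base point $x_1$ in the neighborhood just obtained, shifted by approximately $\theta\varepsilon/2$ in a chosen $w_j$-direction along $E^c_p$, and rerun the construction with the $H^j$'s restarted at $x_1$; this is legitimate because each $H^j$ is defined on all of $\varphi_p^{-1}(\Delta)\cap B(z,2d\varepsilon)$. After $N$ iterations of order $1/\theta$ the class $\mathcal{A}(x_0)$ contains an $E^c_p$-tube of extent at least $2\varepsilon$ around $\pi(x_0)$, and the bisaturation of $\Delta$ then shows that $\varphi_p(B(z,\varepsilon))$ is contained in $\mathcal{A}(x_0)$, proving accessibility on $\varphi_p(B(z,\varepsilon))$. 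The factor $2d$ in the radius of the $\theta$-accessibility domain is precisely what is needed so that, given the displacement budget $\varepsilon/(10d)$ per application guaranteed by (c), every intermediate base point in the iteration remains in the common domain $\varphi_p^{-1}(\Delta)\cap B(z,2d\varepsilon)$.

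The main obstacle is the degree step, because the pointwise error $\varepsilon/(10d)$ from (c) is of the same order as the signal $\theta\varepsilon$ from (d): one cannot simply declare $\Psi$ a small perturbation of the affine model in the ambient norm of $T_pM$. The key is to use Remark~\ref{r.transverse} to decompose the error contributed by each intermediate $H^k$ into a genuinely small $E^c_p$-component---which is what preserves the Brouwer degree and must not dominate $\theta\varepsilon$---and a potentially larger $(E^s_p\oplus E^u_p)$-component, which is harmless because it is absorbed by the bisaturation step. A secondary technical point is choosing $\delta_2$, $\rho_2$ and $U_2$ uniformly in $\theta$ so that the splitting estimate of Remark~\ref{r.transverse}, the linearization of $\Psi$ on $\partial[-1,1]^c$, and the containment of all iterated base points in the common domain all hold simultaneously.
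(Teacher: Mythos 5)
Your overall strategy (degree argument built from the maps $H^j$, plus motion along stable/unstable leaves inside the bisaturated set) is in the right spirit, but the central step is not justified by the hypotheses, and this is exactly the difficulty you flag yourself. You run the Brouwer-degree argument only in the $c$ center directions, at the scale $\theta\varepsilon$, for the map $\Psi(s_1,\dots,s_c)=H^c(s_c,1,\cdot)\circ\cdots\circ H^1(s_1,1,x_0)$. On a face of $\partial[-1,1]^c$, only one coordinate equals $\pm1$; for the remaining coordinates the definition of $\theta$-accessibility gives only condition (c), i.e.\ a bound $\varepsilon/(10d)$ on the \emph{norm} of $H^k(s,1,x)-x$ with no information whatsoever about its direction. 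Since in the intended application $\theta=\alpha\eta d$ is small, these uncontrolled errors of size $\varepsilon/(10d)$ dominate the signal $\theta\varepsilon$, so the claimed homotopy of $\pi\circ\Psi|_{\partial[-1,1]^c}$ to the affine model, avoiding points of a ball of radius proportional to $\theta\varepsilon$, fails. Your proposed repair via Remark~\ref{r.transverse} does not close this gap: that remark is a statement about the charts and the splitting $E^s\oplus E^c\oplus E^u$, and it cannot convert the purely metric bound (c) into a statement that the $E^c_p$-component of the error of each intermediate $H^k$ is small compared to $\theta\varepsilon$; the lemma must be proved from (a)--(d) alone, and (a)--(d) simply do not contain that directional information. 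The subsequent iteration over base points has a related quantitative problem: each of the $\sim\theta^{-1}$ iterations may drift the base point by order $\varepsilon/(10d)$ in an uncontrolled direction, so the total drift $\sim\varepsilon/(10d\theta)$ is not confined to the fixed domain $B(z,2d\varepsilon)$ as $\theta\to0$, contrary to your final assertion.

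The paper's proof avoids both issues by a different organization of the same ingredients. It first turns each $H^j$ into a ``center flow'' $\Phi^{c+j}_t$ by concatenating $\sim\theta^{-1}$ full applications $H^j(\pm1,1,\cdot)$: by condition (d) each full step has error only $\theta\varepsilon/(10d)$, so after $\sim3\theta^{-1}$ steps one obtains a displacement of order $3\varepsilon$ along $w_j$ with accumulated error of order $\varepsilon/(10d)$ (condition (c) is used only once, for the fractional part of $t$). It then combines these center flows with flows along the stable and unstable leaves of $\Delta$ (which track translations by $t\theta\varepsilon v_i$ up to error $|t|\theta\varepsilon/(10d)$, see~\eqref{e.lip}) into a single continuous map $P(t_1,\dots,t_d)=\Phi^1_{t_1}\cdots\Phi^d_{t_d}(x_0)$ on $[-3\theta^{-1},3\theta^{-1}]^d$, which is $C^0$-close (error a definite fraction of $\varepsilon$) to the affine map $(t_i)\mapsto x_0+\sum_i t_i\theta\varepsilon v_i$. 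One Brouwer-degree argument in the full $d$-dimensional space, at scale $\varepsilon$ rather than $\theta\varepsilon$, then shows the image of $P$ contains $B(z,\varepsilon)$, and since every image point is joined to $x_0$ by an $su$-path in $\Delta$, accessibility on $\varphi_p(B(z,\varepsilon))$ follows at once --- no separate saturation step, no iteration over base points, and no need for directional control of the errors at intermediate parameters. The missing idea in your proposal is precisely this amplification of the center motion from scale $\theta\varepsilon$ to scale $\varepsilon$ before invoking degree theory, together with treating the $s$, $u$ and $c$ directions simultaneously in one full-dimensional map.
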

\begin{proof}
Let $u$, $s$ be the dimensions of the bundles $E^u$, $E^s$. Hence $d=u+c+s$.
Let $v_1,\dots,v_u$ and $v_{u+c+1},\dots,v_{d}$
be orthonormal bases  of $E^u_p$ and $E^s_p$, respectively.
We define local flows $(\Phi^i_t)$ (for $i\in \{1,\dots,u\}\cup\{u+c+1,\dots,d\}$) on $\varphi_p^{-1}(\Delta)$, as in the proof of Lemma~\ref{c.cover0}.

As in the proof of Lemma~\ref{c.cover0}, we define for each $j\in \{1,\dots,u\}$
a local flow $\Phi^j$: at any $x\in B(z,2d\varepsilon)\cap \varphi_p^{-1}(\Delta)$
it is tangent to the vector field obtained by projecting the vector $\theta \varepsilon v^u_j$ at $x$
orthogonally to $E^u$ on the tangent space $\varphi_p^{-1}(W^u(\varphi_p(x)))$.
The point $\Phi^j_t(x)j$ is the projection of $x+t\theta \varepsilon v^u_j$ orthogonally onto $\varphi_p^{-1}(W^u(\varphi_p(x)))$.
Similarly, for $j\in \{1,\dots,s\}$, we define a flow $\Phi^{u+c+j}_t(x)$ in the direction
of $v^s_j$, along the stable leaves of $\varphi_p^{-1}(\Delta)$.
Choosing $\rho_2$, $\delta_2$, $U_2$ small, the tangent spaces of the unstable and stable leaves
of $g$ inside  $\varphi_p^{-1}(\Delta)$ in $B(p,\rho_2)$ are close to $E^u_p$ and $E^s_p$.
This gives
\begin{equation}\label{e.lip}
\|\Phi^i_t(x)-(x+t\theta \varepsilon v_i)\|< |t|\theta \frac \varepsilon {10d}.
\end{equation}

We also define maps in the center direction. Let us set $v_{c+j}=w_j$.
For $j\in\{1\dots,c\}$,
we introduce inductively $\Phi^{c+j}_t(x)$ (while it can be defined) by
$$\Phi^{c+j}_t(x)=H^j(t,1,x) \text{ when } t\in [0,1),$$
$$\Phi^{c+j}_t(x)=\Phi^{c+j}_{t-1}\circ \Phi^{c+j}_1(x) \text{ when } t>1,$$
$$\Phi^{c+j}_t(x)=\Phi^{c+j}_{t+1}\circ \Phi^{c+j}_{-1}(x) \text{ when } t<0,$$
Let us consider a point $x_0\in \varphi_p^{-1}(\Delta)\cap B(z,\varepsilon)$.
From (c), (d) and~\eqref{e.lip}, one can define for each $(t_1,\dots,t_d)\in [-3\theta^{-1},3\theta^{-1}]^d$
$$P(t_1,\dots,t_d)=\Phi^1_{t_1}\dots \Phi^d_{t_d}(x_0).$$
This induces a continuous map satisfying
$$\|P(t_1,\dots,t_d)-(x_0+{\textstyle\sum}_i t_i\theta\varepsilon v_i)\|<\frac {2\varepsilon} {10}.$$
The Brouwer fixed point theorem implies that the image of $P$ contains the ball centered at $x_0$
of radius $(3-\textstyle \frac 1 2)\varepsilon$, and  hence the ball $B(z,\varepsilon)$.
By construction, the points in the image of $\varphi_p\circ P$ are be connected to $z_0$ by an su-path in $\Delta$.
We have thus shown that $(g,\Delta)$ is accessible on $B(z,\varepsilon)$.
\end{proof}

\subsection{Elementary perturbations.}
The perturbation will be built from the
\begin{lemma}\label{l.perturbation}
There exists $\eta,\alpha_0>0$ small with the following properties.

For any $\alpha\in (0,\alpha_0)$, $p\in \Lambda$,
$z\in B(0,1/4)\subset T_pM$,  $ r \in (0,1/4)$ and any unit vector $v\in E^c_p$,
there exists a diffeomorphism $T$ of $T_pM$:
\begin{itemize}
\item[--] which is supported on $B(z,3 r )$,
\item[--] whose restriction to $B(z,2 r )$ coincides with
$$x\mapsto x+\alpha \eta  r  v,$$
\item[--] whose tangent map $DT(y)$ is $\alpha$-close to $\id$ for any $y\in T_{p}M$,
\item[--] which is $\frac r {100d^2}$-close to the identity in the $C^0$ distance.
\end{itemize}
Moreover, if $f$ preserves a volume $m$ or a symplectic form $\omega$, then such a  $T$ can be constructed so that the maps $\varphi_p\circ T\circ \varphi_p^{-1}$ preserve $m$ or $\omega$ as well.
\end{lemma}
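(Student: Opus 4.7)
The plan is to build $T$ as the time-$\alpha$ flow of a vector field $X$ on $T_pM$ that equals $\eta r v$ on a slightly enlarged plateau containing $B(z,2r)$ and vanishes outside $B(z,3r)$, with $\|X\|_{C^0}=O(\eta r)$ and $\|DX\|_{C^0}=O(\eta)$ uniformly in $r$ and $z$. The crucial point is that the factor $r$ in the $C^0$-size of $X$ cancels the $1/r$ in the $C^1$-size of the localizing bump, producing bounds independent of $r$ that justify the universality of $\eta,\alpha_0$.

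Fix once and for all a smooth bump $\chi_0\colon\RR^d\to[0,1]$ with $\chi_0\equiv 1$ on $B(0,5/2)$, $\supp(\chi_0)\subset B(0,3)$, and $\|\nabla\chi_0\|_\infty\le C_0$, and set $\chi(x):=\chi_0((x-z)/r)$; thus $\chi\equiv 1$ on $B(z,5r/2)$, $\supp(\chi)\subset B(z,3r)$, and $\|\nabla\chi\|\le C_0/r$. In the general (non-preserving) case, take $X(x):=\eta r\,\chi(x)\,v$. For volume preservation, choose a unit vector $v'\in T_pM$ with $v'\perp v$ and set $\psi(x):=\eta r\,\langle x-z,v'\rangle\,\chi(x)$; let $X$ be the planar Hamiltonian field $X:=(\partial_{v'}\psi)\,v-(\partial_v\psi)\,v'$, which is divergence-free by construction. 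For the symplectic case, property~(2) of Section~2.1 puts $\omega$ in standard Darboux form on $T_pM$, so pick $w\in T_pM$ with $\omega(w,v)=1$ and $\|w\|=O(1)$, let $H(x):=-\eta r\,\langle x-z,w\rangle\,\chi(x)$, and take $X:=X_H$. In each case, on the plateau $\{\chi=1\}$ the derivatives of $\chi$ vanish and the only surviving term is $X=\eta r v$; globally, using $|\langle x-z,\cdot\rangle|\le 3r$ on $\supp(\chi)$, one obtains $\|X\|_{C^0}\le C_1\eta r$ and $\|DX\|_{C^0}\le C_1\eta$ for a universal $C_1$.

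Now define $T:=\Phi_X^\alpha$, the time-$\alpha$ map of the flow of $X$. Gronwall's inequality yields $\|DT-\id\|\le C_2\alpha\eta$ and $\|T-\id\|_{C^0}\le \alpha\eta r$. Choosing $\eta\le\min\{1/(100 d^2),\,1/C_2\}$ and $\alpha_0\le 1/2$ then provides the required $C^1$-bound $\alpha$ and $C^0$-bound $r/(100d^2)$. Since $X\equiv 0$ off $B(z,3r)$, the map $T$ is the identity there. For the plateau property, the speed of any orbit is at most $\eta r$, and $\alpha\eta r<r/2$, so any trajectory starting in $B(z,2r)$ stays inside $B(z,5r/2)\subset\{\chi=1\}$ throughout $[0,\alpha]$; on this set $X$ is the constant field $\eta r v$, so $T(x)=x+\alpha\eta r v$ on $B(z,2r)$. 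Finally, divergence-freeness and Hamiltonian character are preserved under the flow, so the volume- and symplectic-case constructions yield $T$ preserving the appropriate form.

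The main technical obstacle is nothing deep --- only the balance between the plateau radius, the speed of the flow, and the support of $\chi$: one must take the bump equal to $1$ on a set strictly larger than $B(z,2r)$ (here $B(z,5r/2)$) so that $B(z,2r)$ is carried entirely inside the plateau for all times in $[0,\alpha_0]$; and one must verify that the Hamiltonian/stream-function corrections involving $\partial\chi$ vanish identically on the plateau, so no secondary terms pollute the exact translation $x\mapsto x+\alpha\eta r v$ on $B(z,2r)$.
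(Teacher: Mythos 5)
Your construction is correct and follows essentially the same route as the paper's (very terse) proof: the time-$\alpha$ flow of a bump-truncated constant field, with your explicit $r$-scaling of the bump playing the role of the paper's conjugation by a homothety, a stream-function (divergence-free) field in the volume case, and a cut-off linear Hamiltonian in the symplectic case. The only detail to adjust is the choice of $w$ in the symplectic case: the condition $\omega(w,v)=1$ alone does not guarantee $X_H=\eta r\,v$ on the plateau; take instead $H(x)=\eta r\,\omega(v,x-z)\,\chi(x)$ (equivalently, let $w$ be the metric dual of the $1$-form $\omega(\cdot,v)$), after which the rest of your argument goes through unchanged.
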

\begin{proof}
The construction is standard.
One first notices that it can be done in the case $ r =1/4$.
One then reduces $ r $ by conjugating by an homothety.

With a bump function, one builds a vector fields which takes the constant value $v$ on $B(z,2 r )$
and which vanishes outside $B(z,3 r )$.
There exists $\eta>0$ such that the time $t$ of the flow is at distance $\eta^{-1}.t$ from the identity in the $C^1$-topology.
For $\alpha>0$ small, the map $T$ is the time $\alpha\eta$ of the flow.

In the volume-preserving case, the lift of the volume form is constant in the domain of the charts.
Choosing a divergence free vector field, the map $T$ preserves the volume.

In the symplectic case, the symplectic form in the chart is constant.
The constant vector field is hamiltonian. Using a bump function, one can extend the hamiltonian
to a function which vanishes outside $B^c(z,3 r )$.
The associated vector field is then symplectic as required.
\end{proof}

The diffeomorphism $g$ will be obtained from $f$ as a composition:
\begin{equation}\label{e.def-g}
g:=\Psi_{L}\circ\dots\circ \Psi_1\circ f,\quad
\text{with} \quad \Psi_{\ell}:=\varphi_{p_\ell}\circ T_\ell\circ \varphi_{p_\ell}^{-1},
\end{equation}
where the points $p_\ell$ belong to $\Lambda$ and where
the maps $T_\ell$ are diffeomorphisms of $T_{p_\ell}M$ given by Lemma~\ref{l.perturbation}
which coincide with the identity outside some sets $\Omega_\ell$ contained in $B(0,2\rho)\subset T_{p_\ell}M$,
for some $\rho>0$ small which will be chosen later.
The supports $\varphi_{p_\ell}(\Omega_\ell)$ will be chosen pairwise disjoint so that
the maps $\Psi_\ell$ commute.
\medskip

We consider cone fields $\cC^u_0, \cC^s_0$ on ${U_0}$,
respectively invariant by $f$ and $f^{-1}$ as in Proposition~\ref{p=conefields}.
If $\delta_3>0$ is small,
for any $g$ that is $\delta_3$-close to $f$ in the $C^1$ topology,
the same cone fields $\cC^u_0, \cC^s_0$ are still invariant by $g$ and $g^{-1}$.\hspace{-1cm}\mbox{}
\medskip

We may assume without loss of generality that  $\delta< \min(\delta_1,\delta_2,\delta_3)$.
Recall that the charts $\varphi_p$ depend continuously on $p$ in the $C^1$-topology
when $p$ belong to the atoms of a finite compact covering of $M$. Consequently, there exists $\rho_3>0$ and $\alpha\in (0,\alpha_0)$ small
such that if the support $\Omega_\ell$ of each map $T_\ell$ is contained in $B(0,2\rho)\subset T_{p_\ell}M$
for some $\rho\in(0,\rho_3)$ and has a tangent map $DT$ which is $\alpha$-close to the identity, then
the diffeomorphism $g$ is $\delta$-close to $f$ in the $C^1$ distance.

\subsection{Choice of $J$ and $U$.}
For $J\geq 1$,
let us introduce the  iterates $\cC^u=Df^J(\cC^u_0)$ and $\cC^s=Df^{-J}(\cC^s_0)$ on a neighborhood $U$ of $\Lambda$
satisfying
\begin{equation}\label{e.defU}
\overline U\subset U_1\cap U_2\cap \bigcap_{|k|\leq J} f^k(U_0).
\end{equation}
The contraction of the cone fields ensures that $\cC^u$ and $\cC^s$ get arbitrarily close to the bundles
$E^u$ and $E^s$ (defined on the maximal invariant set of $\overline U$) as $J\to +\infty$.
Hence, there exist $J_1\geq 1$ and $\rho_4>0$ such that
if $J\geq J_1$ and if $\rho<\rho_4$, then for any $p\in \Lambda$,
\begin{itemize}
\item[--] $\varphi_p(B(0,2\rho))\subset U$,
\item[--] the cone fields $D\varphi_p^{-1}(\cC^s)$ and $D\varphi_p^{-1}(\cC^u)$ on $B(0,2\rho)$ are $\gamma$-close to the spaces $E^s_p$ and $E^u_p$
in $T_pM$ for some $\gamma>0$ much smaller than $\alpha\eta$.
\end{itemize}
In particular, from the choice of the Riemannian metric,
for any point $z\in B(0,2\rho)\subset T_pM$, the orthogonal projection of any unit vector $u\in D\varphi_p(z)^{-1}\left(\cC^s(\varphi_p(z)\right)$
on $E^c_p$ has norm smaller than $1/2$.
\medskip

We now fix:
\begin{itemize}
\item[--] $\rho>0$ smaller than $\min(\rho_1,\rho_2,\rho_3,\rho_4)$,
\item[--] $J\geq J_1$ large enough so that any $c$-admissible disk $D$ with center $p\in \Lambda$ and radius $r(D)<J^{-1}$
lifts by $\varphi_p$ as a subset of $B(0,\rho)\subset T_pM$,
\item[--] the neighborhood $U$ to satisfy~\eqref{e.defU},
\item[--] a $c$-admissible family $\cD$ of disks as in the statement of Proposition~\ref{l=localaccess},
\item[--] $\sigma>0$ as in the statement of Proposition~\ref{l=localaccess}.
\end{itemize}
The construction also depends on a number $\varepsilon>0$ smaller than $\sigma$, $\rho_2$ and $K^{-1}r(D)$ for any $D\in \cD$ (as in Lemma~\ref{c.cover0}).
We will specify later the value of $\varepsilon$.

\subsection{Construction of the diffeomorphism $g$.}
We associate to each $c$-admissible disc $D\in \cD$
a set of balls as given by Lemma~\ref{c.cover0}.
The union of these sets defines a family $\cB$ of balls
$B_i:=\varphi_{p_i}(B(z_i,100d^2\varepsilon))$
inside the tangent spaces of points $p_i\in \Lambda$.
Since the discs $D\in \cD$ are disjoint,
by choosing $\varepsilon>0$ small enough the items (a) and (b) in Lemma~\ref{c.cover0}
ensure that the balls $B_i$ are pairwise disjoint.
\medskip

We now define $g\circ f^{-1}$ in each $B_i$ separately.
The choice of $\rho$ gives $\varphi_{p_i}(B(z_i,100d^2\varepsilon))\in U$, and one can choose two spaces $\mathcal{E}^u,\mathcal{E}^s\subset T_{p_i}M$
with the same dimension as $E^u_{p_i}$ and $E^s_{p_i}$ and satisfying
$$D\varphi_{p_i}(z_i).\mathcal{E}^s\subset \mathcal{C}^s(\varphi_{p_i}(z_i)),\quad
D\varphi_{p_i}(z_i).\mathcal{E}^u\subset \mathcal{C}^u(\varphi_{p_i}(z_i)).$$
We choose two unit vectors $e^s\in \mathcal{E}^s$, $e^u\in \mathcal{E}^u$
and we also fix an orthonormal basis $w_1,\dots, w_c$ of $E^c_{p_i}$.

For each $j\in\{1,\dots,c\}$,
the Lemma~\ref{l.perturbation} provides us with:
\begin{itemize}
\item[--] diffeomorphisms $T_{i,j}$ of $T_{p_i}M$,
whose restriction to $B(z_i+ 10 j d\varepsilon e^s,2d\varepsilon)$ coincides with the translation by $\alpha\eta d\varepsilon w_j$,
\item[--] diffeomorphisms $T_{i,-j}$ of $T_{p_i}M$,
whose restriction to $B(z_i- 10 j d\varepsilon e^s,2d\varepsilon)$ coincides with the translation by $-\alpha\eta d\varepsilon w_j$,
\end{itemize}
Moreover $DT_{i,\pm j}$ is $\alpha$-close to the identity and $T_{i,\pm j}$
coincides with the identity
outside $B(z_i\pm 10 j d\varepsilon e^s,3d\varepsilon)$.
\medskip

Since the norm of the orthogonal projection of $\mathcal{E}^s$ to $E^{c}$ is less than $1/2$,
the supports of the $T_{i,j}$ and $T_{i,-j}$ for $j\in\{1,\dots,c\}$, are pairwise disjoint, and also disjoint from
$B(z_i,2d\varepsilon)$.
Also the union of the supports is contained in $B(z_i,100d^2\varepsilon)\subset T_{p_i}M$.
Since the balls $B_i$ are disjoint, the composition of $f$
with all the $\Psi_{i,\pm j}:=\varphi_{p_i}\circ T_{i,\pm j}\circ \varphi_{i,j}^{-1}$
as in~\eqref{e.def-g} defines a diffeomorphism $g$, which is $\delta$-close to $f$
in the $C^1$-topology. Since the diameters of these balls has been chosen small,
$g$ is also $\sigma$-close to $f$ in the $C^0$-topology.

It remains to check the item (3) of Proposition~\ref{l=localaccess}.

\subsection{$\theta$-accessibility.}
We set $\theta={\alpha\eta}d$ and
check the criterion in Section~\ref{ss.theta}.
\begin{lemma}\label{l.theta-access}
If $\varepsilon>0$ is small enough, then
for any diffeomorphism $\widetilde g$ that belongs to a small $C^1$-neighborhood of $g$
and for any bisaturated set $\widetilde \Delta\subset U$ for $\widetilde g$, then
the pair $(\widetilde g,\widetilde \Delta)$ is $\theta$-accessible on each ball $\varphi_{p_i}(B(z_i,2d\varepsilon))$.
\end{lemma}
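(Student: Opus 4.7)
The plan is to verify the $\theta$-accessibility criterion of Section~\ref{ss.theta} by constructing, for each $j\in\{1,\ldots,c\}$ and $s\in[-1,1]$, a 4-legged $su$-path $\varphi_{p_i}\circ H^j(s,\cdot,x)$ for $\widetilde g$ whose endpoint lies within $\theta\varepsilon/(10d)$ of $x+s\theta\varepsilon w_j$. The whole argument takes place in the chart $\varphi_{p_i}$; by the choice of $J$ (large) and $\rho$ (small), the local $\widetilde g$-stable and $\widetilde g$-unstable leaves in $\widetilde\Delta\cap B(0,2\rho)$ are graphs over their tangent directions with slope much smaller than $\theta=\alpha\eta d$, so that traversal of a stable leaf in the $e^s$ direction and of an unstable leaf in the $e^u$ direction is, to leading order, translation.

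For $x\in \varphi_{p_i}^{-1}(\widetilde\Delta)\cap B(z_i,2d\varepsilon)$, each $j$, and $s\in[0,1]$, the four legs are defined as follows: leg~1 follows the $\widetilde g$-stable leaf through $x$ until its $e^s$-coordinate has advanced by $10jd\varepsilon\cdot s$, reaching a point $y_1$ which at $s=1$ lies inside the perturbation support $B(z_i+10jd\varepsilon e^s,2d\varepsilon)$; leg~2 follows the $\widetilde g$-unstable leaf through $y_1$ by a small fixed amount $\lambda\ll\varepsilon$ in the $e^u$ direction to $y_2$; leg~3 follows the $\widetilde g$-stable leaf through $y_2$ backwards by $10jd\varepsilon\cdot s$ in $-e^s$ to $y_3$; leg~4 follows the $\widetilde g$-unstable leaf through $y_3$ backwards by $\lambda$ in $-e^u$ to the endpoint $y_4$. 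The $t$-parameter is a rescaled arclength, and $s\in[-1,0]$ uses the symmetric construction with the opposite ball and $T_{i,-j}$. Properties (a)--(c) are then immediate: at $s=0$ the path is constant; each leg sits on a single $\widetilde g$-stable or $\widetilde g$-unstable leaf, which remains inside $\widetilde\Delta$ by bisaturation; and the total displacement from $x$ is of order $\theta\varepsilon+\lambda$, well below $\varepsilon/(10d)$ for $\lambda,\varepsilon$ small.

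The crux is property (d). For $f$ in place of $\widetilde g$, the analogous 4-legged construction using $f$-leaves would form an $su$-loop that closes at $x$ modulo second-order cone-opening and curvature errors, since the four legs are approximate translations along $e^s$ and $e^u$ that cancel in pairs. Passing from $f$ to $\widetilde g$, the only stable leaves whose behavior is substantially altered are those crossing the support of $T_{i,j}$: on $B(z_i+10jd\varepsilon e^s,2d\varepsilon)$ the map $\widetilde g\circ f^{-1}$ is (up to a $C^0$-error coming from $\widetilde g$ being close to $g$) the translation $T_{i,j}$ by $\theta\varepsilon w_j$, and a formal analysis shows that the $\widetilde g$-stable leaf emerging from this support is shifted in the $w_j$-direction by approximately $\theta\varepsilon w_j$ relative to the corresponding $f$-stable leaf. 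Propagating this shift through leg~3 yields $y_3\approx y_3^f+\theta\varepsilon w_j$, and since the $f$-construction closes up ($y_4^f\approx x$), we conclude $y_4\approx x+\theta\varepsilon w_j$. The main obstacle is this precise tracking of the perturbation's contribution to the $\widetilde g$-stable holonomy; it is overcome by taking $\varepsilon$ and $\lambda$ sufficiently small (relative to $\theta$, the cone openings, and the $C^1$-distance from $\widetilde g$ to $g$) so that all second-order errors are absorbed into the tolerance $\theta\varepsilon/(10d)$, and disjointness of the perturbation supports ensures $T_{i,j}$ acts exactly once on the relevant stable leaf. Uniformity over $\widetilde g$ in a small $C^1$-neighborhood of $g$ follows from the continuous dependence of local $\widetilde g$-stable and $\widetilde g$-unstable laminations on the diffeomorphism in the $C^1$-topology.
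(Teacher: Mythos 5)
Your argument breaks down at exactly the step you call the crux, property (d), because you attribute the effect of the perturbation to the wrong family of invariant leaves. In the paper's construction~\eqref{e.def-g} the perturbation is post-composed, $g=\Psi_L\circ\dots\circ\Psi_1\circ f$, and since $R(\cD)>J$ the supports $\varphi_{p_i}(B(z_i\pm 10jd\varepsilon e^s,3d\varepsilon))$ are disjoint from their first $J$ $f$-iterates; hence $g$ coincides with $f$ on a neighborhood of $B_i$ and $g^n=f^n$ there for $1\le n\le J$. Consequently the local \emph{stable} leaves of $g$ (and of any $\widetilde g$ that is $C^1$-close) through the support balls are, up to an error that is small with $J$ and $\gamma$-close in direction to $\mathcal{E}^s$, the same as those of $f$: they are \emph{not} shifted by $\theta\varepsilon w_j$. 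The translation shows up instead in the \emph{unstable} leaves: on each support ball the unstable directions of $g$ lie in $D\Psi_{i,\pm j}(\cC^u)$, so the local unstable leaves are close to the leaves of the perturbed model foliation $\cF^u=(T_{i,1}\circ\cdots\circ T_{i,c})(\cF^u_0)$, which inside the inner ball are affine leaves translated by $\theta\varepsilon w_j$. Your claim that ``the $\widetilde g$-stable leaf emerging from this support is shifted in the $w_j$-direction'' is therefore false for this $g$, and property (d) is not established.

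Independently of which family is shifted, your quadrilateral cannot produce a displacement of order $\theta\varepsilon$. A nonzero holonomy commutator arises only when exactly one of the two legs of a given type traverses the transition annulus of the bump function, where the leaf geometry actually changes. Your two stable legs run between the same two $e^s$-levels, offset transversally only by $\lambda\ll\varepsilon$, so whatever contribution one picks up the other cancels to first order; and your unstable legs, of length $\lambda\ll\varepsilon$, never cross the transition annulus (of width comparable to $d\varepsilon$), staying in regions where the leaves are affine (translated or not), hence contribute nothing. The paper engineers the needed asymmetry: the supports are offset from $z_i$ along $e^s$, so the first \emph{long} unstable leg starting in $B(z_i,2d\varepsilon)$ misses every support, while after the stable leg the returning unstable leg crosses the support of $T_{i,\pm j}$ and is displaced by exactly $\theta\varepsilon w_j$ in the model (the two Claims in the paper's proof); one then transfers this to $(\widetilde g,\widetilde\Delta)$ by comparing the true laminations with the model foliations via the $J$-times contracted cone fields, exactly because the stable leaves stay straight and the unstable leaves follow $\cF^u$. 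Shrinking $\varepsilon$ and $\lambda$ cannot repair your version: the quantity you must produce, $\theta\varepsilon$, is itself first order in $\varepsilon$, so it cannot emerge from terms you have arranged to be $o(\varepsilon)$.
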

\begin{proof}
Let us describe the holonomies in each ball $B(z_i,100d^2\varepsilon)\subset T_{p_i}M$.
We introduce the affine foliations $\cF^u_0,\cF^s_0$ of $T_{p_i}M$ by leaves parallel to $\mathcal{E}^u$ and $\mathcal{E}^s$
and the perturbed foliation
$$\cF^u:=(T_{i,1}\circ\dots \circ T_{i,c})(\cF^u_0).$$

We next introduce flows $\Phi^s,\Phi^u$ along the leaves of $\cF^s_0$ and $\cF^u$ in the directions $e^s$ and $e^u$
as in proof of Lemma~\ref{c.cover0}. For the linear foliation $\cF^s$, $\Phi^s$ simply coincides with the
linear flow $(x,t)\mapsto x+te^s$.
For $\cF^u$, one defines $\Phi^u(x,t)$ by projecting the flow $(x,t)\mapsto x+te^u$ on the leaves of $\cF^u$,
along the space $\mathcal{E}^s+E^c_{p_i}$.

\begin{claim}
For each $j\in\{1,\dots,c\}$, the composition
$$\Phi^s_{-10d\varepsilon}\circ \Phi^u_{-10 j d\varepsilon}\circ \Phi^s_{10d\varepsilon}\circ \Phi^u_{10 j d\varepsilon}$$
coincides on $B(z_i,2d\varepsilon)$ with the translation by $\theta\varepsilon w_j$.

Similarly, the composition
$\Phi^s_{10d\varepsilon}\circ \Phi^u_{-10 j d\varepsilon}\circ \Phi^s_{-10d\varepsilon}\circ \Phi^u_{10 j d\varepsilon}$
coincides on $B(z_i,2d\varepsilon)$ with the translation by $-\theta\varepsilon w_j$.
\end{claim}
\begin{proof}
Indeed on $B(z_i,2d\varepsilon)$ the map $\Phi^u_{10 j d\varepsilon}$ coincides with $x\mapsto x+(10jd\varepsilon)e^u$
(by construction the support of the maps $T_{i,j}$
does not intersect $B(z_i,2d\varepsilon)+\mathbb{R} e^u$).
On $B(z_i,2d\varepsilon)+(10jd\varepsilon)e^u+10d\varepsilon$,
the map $\Phi^u_{-10 j d\varepsilon}$ coincides with the composition of $x\mapsto x-(10jd\varepsilon)e^u$
with the translation by $\alpha\eta d\varepsilon w_j=\theta\varepsilon w_j$.
The first part of the claim follows. The second is obtained analogously.
\end{proof}

Arguing in a similar way and using the fact that the $C^0$ size of the perturbations
$T_{i,j}$ is smaller than $\frac{\varepsilon}{100d}$, one gets:

\begin{claim}
For each $j\in\{1,\dots,c\}$ and $s\in [0,1]$ the compositions
$$\Phi^s_{-s10d\varepsilon}\circ \Phi^u_{-s10 j d\varepsilon}\circ \Phi^s_{s10d\varepsilon}\circ \Phi^u_{s10 j d\varepsilon}$$
$$\text{ and } \Phi^s_{s10d\varepsilon}\circ \Phi^u_{-s10 j d\varepsilon}\circ \Phi^s_{-s10d\varepsilon}\circ \Phi^u_{s10 j d\varepsilon}$$
are at a $C^0$-distance smaller than $\frac{\varepsilon}{10d}$ from the identity.
\end{claim}

Let $\widetilde g$ be a diffeomorphism $C^1$-close to $g$
and $\widetilde \Delta\subset U$ be a bisaturated set for $\widetilde g$.
At each $x$ in $B(0,2\rho)\cap \varphi_{p}^{-1}(\Delta)$,
the connected components of the leaves $\varphi_{p}^{-1}(W^{s/u}_{\widetilde g}(\varphi_{p}(x)))\cap B(0,2\rho)$
containing $x$ define leaves $W^{s/u}_{\widetilde g,loc}(x)$.
One can define flows $\widetilde \Phi^s$ and $\widetilde \Phi^u$
in $B(0,2\rho)\cap \varphi_{z_i}^{-1}(\widetilde \Delta)$ as before:
$\Phi^u(x,t)$ is obtained by projecting $(x,t)\mapsto x+te^u$ on the leaves $W^{u}_{\widetilde g,loc}(x)$
along the space $\mathcal{E}^u+E^c_{p_i}$;
$\Phi^s(x,t)$ is obtained by projecting $(x,t)\mapsto x+te^s$ on the leaves $W^{s}_{\widetilde g,loc}(x)$
along the space $\mathcal{E}^s+E^c_{p_i}$.

We then define the map
$$H^j\colon [0,1]\times [-1,1]\times \varphi_p^{-1}(\Delta)\cap B(z_i,2d\varepsilon)\to \varphi_p^{-1}(\Delta)\cap B(0,2\rho).$$
For $s\in [-1,1]$ and $x$ in $\varphi_p^{-1}(\Delta)\cap B(0,2d\varepsilon)$, the arc $t\mapsto H^j(s,t,x)$
is the concatenation of the four arcs
$$t\mapsto \Phi^u_{|s|t40 j d\varepsilon}(x),$$
$$t\mapsto \Phi^s_{st40 j d\varepsilon}(\Phi^u_{|s|40 j d\varepsilon}(x)),$$
$$t\mapsto \Phi^u_{-|s|t40 j d\varepsilon}(\Phi^s_{s40 j d\varepsilon}\circ \Phi^u_{|s|40 j d\varepsilon}(x)),$$
$$t\mapsto \Phi^s_{-st40 j d\varepsilon}(\Phi^u_{-|s|40 j d\varepsilon}\circ \Phi^s_{s40 j d\varepsilon}\circ\Phi^u_{|s|40 j d\varepsilon}(x)).$$
The items (a), (b) of the definition of the $\theta$-accessibility hold by construction.
\medskip

The stable leaves $W^{s}_{\widetilde g,loc}(x)$ are $C^1$-close to the leaves of the foliation
$\cF^s_0$. Indeed the $J$ first iterates of $g$ and $f$ coincide on $B_i$ (since $D(\cD)>J$)
so that the tangent spaces of the unstable leaves of $g$ are tangent to the cone field $\cC^s$
and its preimage by $\varphi_{p_i}$ is
$\gamma$-close to the direction $\mathcal E^s$ on $B(z_i,100d^2\varepsilon)$.
The same holds for $\widetilde g$ which is $C^1$-close to $g$.

Similarly the unstable leaves of $g$ and $\widetilde g$ are tangent to the cone field $\cC^u$
on $f^{-1}(B_i)$. Hence on each ball
$\varphi_{p_i}(B(z_i\pm 10 j d\varepsilon e^s,3d\varepsilon))$, they are
tangent to the cone field $D\Psi_{i,\pm j}(\cC^u)$.
Their preimages by $\varphi_{p_i}$ are thus
$\gamma$-close to the leaves of $\mathcal F^u$ on $B(z_i,100d^2\varepsilon)$.
This implies that the trajectories of the flows $\Phi^{s/u}$ and $\widetilde \Phi^{s/u}$ are $C^1$-close.
Together with the two previous claims, it gives the items (c) and (d) of the definition of the $\theta$-accessibility.
\end{proof}

\subsection{Conclusion of the proof of Proposition~\ref{l=localaccess}.}
Let $\Delta\subset U$ be a bisaturated set for $g$.
Let us consider any diffeomorphism $\widetilde g$ that is $C^1$-close to $g$
and any bisaturated set $\widetilde \Delta\subset U$ for $\widetilde g$
(in particular, any bisaturated $\widetilde \Delta$ for $\widetilde g$ contained in a small neighborhood of
$\Delta$).
The pair $(\widetilde g,\widetilde \Delta)$ is $\theta$-accessible on each ball $\varphi_{p_i}(B(z_i,2d\varepsilon))$ by Lemma~\ref{l.theta-access}.
By Lemma~\ref{l.criterion}, the pair 
$(\widetilde g,\widetilde \Delta)$ is accessible on each ball $\varphi_{p_i}(B(z_i,\varepsilon))$.

Let $D$ be any $c$-admissible disk in the family $\cD$, which intersects $\widetilde \Delta$ at a point $z$.
By Lemma~\ref{c.cover0}(3a), there exists $\varphi_{p_i}(B(z_i,\varepsilon))$ and a su-path for $\widetilde g$
between $z$ and a point $y\in \varphi_{p_i}(B(z_i,\varepsilon))$.
By accessibility, $\varphi_{p_i}(B(z_i,\varepsilon))\subset \widetilde\Delta$. By Lemma~\ref{c.cover0}(3b),
any point $x$ in the $\varepsilon/2$-neighborhood of $z$ in $D$
can be connected by a su-path to a point $y'\in \varphi_{p_i}(B(z_i,\varepsilon))$.
By accessibility, $y,y'\in \varphi_{p_i}(B(z_i,\varepsilon))$ belong to a su-path.
This shows that any point in the $\varepsilon/2$-neighborhood of $z$ in $D$
can be connected to $z$ by a su-path.
Since $D$ is connected, any two points in $D$ belong to
a su-path in $\widetilde \Delta$, showing that $(\widetilde g,\widetilde \Delta)$ is accessible on $D$.
This concludes the proof of the stable accessibility of $(g,\Delta)$ on any disk $D\in \cD$,
and of the last item of Proposition~\ref{l=localaccess}.
\endproof

\section{Proof of Proposition~\ref{l=stablecsections}}
For $\Lambda$  a partially hyperbolic set for $f$ and
$U$ a neighborhood of $\Lambda$ with $\overline U\subset U_0(f,\Lambda)$, we denote
the maximal $f$-invariant set in $\overline U$ by
\[\Lambda_U(f) : = \bigcap_{j\in \ZZ} f^{j}(\overline U).\]
For $J\geq 1$, denote by $\hbox{Per}_{\leq J}(f,U)$ the set of points
\[\hbox{Per}_{\leq J}(f, U) := \{p\in \overline U\,\mid\, \hbox{per}(p) \leq J\}.
\]

\subsection{Existence of c-admissible families}
We restate~\cite[Lemma 2.3]{DW} in our setting.
\begin{lemma}\label{l=csectionexistence}
Let $\Lambda$  be a partially hyperbolic set for $f$, let $\beta\in(0,1)$,
let $U$ be neighborhood of $\Lambda$ with $\overline U\subset U_0(f,\Lambda)$ and let $J\geq 1$.

Then for every $\rho>0$ sufficiently small and denoting by
$B_\rho$ the $\rho$-neighbor\-hood of $\hbox{Per}_{\leq J}(f, U)$,
there exist  $q_1,\ldots , q_k \in \Lambda_U(f)\setminus B_\rho$
such that
\begin{enumerate}
\item the balls $B_{\beta \rho}(q_1)$,\dots, $B_{\beta \rho}(q_k)$
cover $\Lambda_U(f)\setminus B_\rho$,
\item $V_{\rho}(q_i)\cap V_{\rho}(q_j)=\emptyset$, for all $i\neq j$, and
\item $V_{\rho}(q_i)\cap f^m\left(V_{\rho}(q_j)\right)=\emptyset$, for all $i,j$ and $0<|m|\leq J$.
\end{enumerate}
In particular $\cD : =\{ V_\rho(q_1),\ldots,V_\rho(q_k) \}$ is a $c$-admissible disk family with respect to $(f,\Lambda_U(f))$ satisfying
$R({\mathcal D}) > J.$
\end{lemma}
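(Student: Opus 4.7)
The plan is a Vitali-type maximal selection on the compact set $S := \Lambda_U(f) \setminus B_\rho$, strengthened so that the three disjointness conditions come out of the construction rather than being imposed afterwards. The geometric input is a uniform constant $C_0 \geq 1$, extracted from the chart construction, such that $V_r(p) \subset B_{C_0 r}(p)$ and $f^m(V_r(p)) \subset B_{C_0 r}(f^m(p))$ for all $p$ close to $\Lambda$, small $r>0$, and $|m| \leq J$. Conditions (2) and (3) then reduce to the metric separation $d(q_i, f^m(q_j)) > 2C_0\rho$ for every $(i,m,j)$ with $(i,m) \neq (j,0)$.

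I would then select the centers greedily: pick $q_1\in S$, and at step $n{+}1$ choose $q_{n+1}\in S$ that is simultaneously outside $\bigcup_{i\leq n} B_{\beta\rho}(q_i)$ and at distance $\geq 3C_0\rho$ from each $f^m(q_i)$ for $i\leq n$ and $0<|m|\leq J$, stopping when no such $q_{n+1}$ exists. Compactness of $S$ forces termination at some finite $k$; conditions (2) and (3) are built into the selection via the geometric constant, while condition (1) demands that every $q\in S$ be covered by some $B_{\beta\rho}(q_i)$ --- that is, that the strengthened criterion never disqualifies a point that is needed for the cover.

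The crux, and where I anticipate the main difficulty, is the following dynamical separation: for $\rho>0$ sufficiently small, every $q\in S$ satisfies $d(q,f^m(q))>3C_0\rho$ for all $0<|m|\leq J$, so that $q$ is never disqualified by one of its own nearby iterates and the greedy procedure proceeds unimpeded. This rests on the continuity of $\phi(q):=\min_{0<|m|\leq J}d(q,f^m(q))$ on $\Lambda_U(f)$, whose zero set is exactly $\Per_{\leq J}(f,U)\cap \Lambda_U(f)$, together with a compactness-and-linearization argument exploiting partial hyperbolicity: the uniform expansion on $E^u$ and contraction on $E^s$, inherited in a neighborhood of $\Lambda$, force the displacement $f^m(q)-q$ to have an $E^u\oplus E^s$-component of size comparable to $d(q,\Per_{\leq J})$, which should yield the required linear-in-$\rho$ lower bound. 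Handling the contribution along $E^c$ --- where hyperbolicity provides no direct control and $\phi$ can in principle vanish to higher order near periodic points with center derivative close to the identity --- is the subtlest point, and presumably what forces the qualifier ``for $\rho$ sufficiently small'' to be taken seriously.
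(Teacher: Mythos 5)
Your reduction of (2)--(3) to the metric separation $d(q_i,f^m(q_j))>2C_0\rho$ is where the argument breaks, and the ``crux'' claim you isolate is in fact false: being at distance $\geq\rho$ from $\Per_{\leq J}(f,U)$ gives no lower bound of the form $c\,\rho$ on $\min_{0<|m|\leq J}d(q,f^m(q))$, and shrinking $\rho$ does not help because the failure scales with $\rho$. Concretely, take $f=A\times g$ on $\mathbb{T}^2\times S^1$ with $A$ Anosov and $g$ having a unique fixed point $z_0$ at which $g(z)-z$ vanishes to third (or infinite) order; this is partially hyperbolic with $E^c$ the circle direction, and nothing in the hypotheses excludes such neutral center returns. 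For $x_0\in\mathrm{Fix}(A)$ the point $q=(x_0,z_0+\rho)$ lies in $\Lambda_U(f)\setminus B_\rho$, yet $d(q,f^m(q))=O(\rho^3)$ for $|m|\leq J$. This also refutes the proposed mechanism for the bound: the displacement has zero $E^u\oplus E^s$ component there, so partial hyperbolicity does not force that component to be comparable to $d(q,\Per_{\leq J})$ --- the distance to the periodic set can be realized entirely along $E^c$. Since condition (1) forces some center $q_i$ to lie within $(1+\beta)\rho$ of such a point, and every candidate in that ball has self-displacement $o(\rho)$ (or at best $O(\beta\rho)$ in the transverse coordinate), no selection scheme based on pushing $d(q_i,f^m(q_i))$ above a fixed multiple of $\rho$ can produce the family. (Secondary, fixable issues: your greedy rule only keeps new centers outside the $\beta\rho$-balls, which does not give the $2C_0\rho$ separation you need for (2); and covering requires that uncovered points are never disqualified by cross-terms $d(q,f^m(q_i))\leq 3C_0\rho$ with $i$ already chosen, which your self-separation claim does not address.)

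The point you are missing is that (3) with $i=j$ must be obtained not from metric separation of $q_i$ and $f^m(q_i)$ but from the positive codimension of the $c$-admissible disks: $V_\rho(q)$ and $f^m(V_\rho(q))$ are $c$-dimensional disks nearly tangent to $E^c$, so they can be made disjoint by offsetting the center transversally (in the $E^u\oplus E^s$ directions, within $\Lambda_U(f)$), i.e. by a general-position choice of the $q_i$, even when $d(q_i,f^m(q_i))\ll\rho$ --- in the product example above, one simply takes the torus coordinate of $q_i$ off $\mathrm{Fix}(A^m)$, so the nearly vertical disks through $q_i$ and $f^m(q_i)$ sit over distinct base points. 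Note also that the paper does not reprove this statement: it invokes \cite[Lemma 2.3]{DW}, whose argument is of this transversality/general-position type, combined with a covering at scale $\beta\rho$; only conditions that survive this codimension mechanism (not a blanket $c\rho$ displacement bound) are available near low-period orbits with nearly neutral center return.
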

\begin{remark}
there exists $\sigma>0$ such that for any diffeomorphism $g$ with $d_{C^0}(g,f)<\sigma$,
the set $\Lambda_U(g)$ is contained in an arbitrarily small neighborhood of $\Lambda_U(f)$
so that we still have $\Lambda_U(g)\setminus B_\rho\subset \cup_i B_{\beta\rho}(q_i)$.
\end{remark}

\subsection{From $c$-admissible families to stable $c$-sections}
The conclusions of Proposition~\ref{l=stablecsections} follow from the next lemma, choosing
 $\rho<J^{-1}$.

\begin{lemma}\label{l=ccoverscsections}   Given  a  partially hyperbolic set $\Lambda$ for $f$,
there exist $\delta,\beta>0$ with the following property.

Let $U$ be neighborhood of $\Lambda$ with $\overline U\subset U_0(f,\Lambda)$,  let $J\geq 1$ and, given $\rho>0$ sufficiently small,
let  $\cD$ be the c-admissible disk family given by Lemma~\ref{l=csectionexistence}. %for $f,\Lambda,U, J, \rho$, and $\beta/2$. 

Then there exists $\sigma>0$ such that
\begin{itemize}
\item[--] for any diffeomorphism $g$ satisfying $d_{C^1}(f,g)<\delta$ and $d_{C^0}(f,g)<\sigma$,
\item[--] for any bisaturated partially hyperbolic set $\Delta\subset U$ for $g$,
\end{itemize} 
the set $|\cD|$ is a $c$-section for $g$.
\end{lemma}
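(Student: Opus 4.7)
My plan is to show directly that for suitably chosen $\delta,\beta,\rho,\sigma$, any nonempty bisaturated subset $\Delta'\subset\Delta$ must meet some disk $V_\rho(q_i)\in\cD$. The argument splits into a local product structure step near each $q_i$ and a step ruling out the possibility $\Delta'\subset B_\rho$.

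\emph{Local product structure near $q_i$.} Using the chart $\varphi_{q_i}$ together with Remark~\ref{r.transverse}, I arrange that $E^u_{q_i},E^c_{q_i},E^s_{q_i}$ are essentially orthogonal in $T_{q_i}M$. By Proposition~\ref{p=conefields}, for any $g$ within $\delta$ of $f$ in $C^1$, the conefields $\cC^u_0,\cC^s_0$ remain invariant, so the local leaves $\cW^u_g(x,\mathrm{loc})$ and $\cW^s_g(x,\mathrm{loc})$ of points $x\in\Lambda_U(g)$ near $q_i$ lift via $\varphi_{q_i}^{-1}$ to $C^1$-graphs nearly parallel to $E^u_{q_i}$ and $E^s_{q_i}$ respectively. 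The disk $V_\rho(q_i)$ lifts to $B^c(0,\rho)\subset E^c_{q_i}$, transverse to both graphs. I choose $\beta>0$ small (depending only on the transversality bounds) so that for any $x\in B_{\beta\rho}(q_i)\cap\Lambda_U(g)$, the two-legged $su$-path starting at $x$ -- first along $\cW^u_g(x,\mathrm{loc})$ until it meets the slice $E^c_{q_i}\oplus E^s_{q_i}$ in the chart, then along the stable leaf of that point until it meets $V_\rho(q_i)$ -- is well-defined and lands in $V_\rho(q_i)$. If $x\in\Delta'$, bisaturation of $\Delta'$ gives that both legs of this $su$-path lie in $\Delta'$, so $\Delta'\cap V_\rho(q_i)\neq\emptyset$.

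\emph{Bisaturated sets cannot fit in $B_\rho$.} Suppose $\Delta'\subset B_\rho$ and pick $x\in\Delta'$. Since $\Per_{\leq J}(f,U)$ is finite, for $\rho>0$ small (depending on $J$, $f$, and $U$) the open balls $B_\rho(p)$, for $p\in\Per_{\leq J}(f,U)$, are pairwise disjoint, so $B_\rho$ is a disjoint union of such balls. By bisaturation, $\cW^s_g(x,\mathrm{loc})\subset\Delta'\subset B_\rho$; but $\cW^s_g(x,\mathrm{loc})$ is a connected $C^1$-manifold of extrinsic diameter at least a uniform constant $D>0$, by the graph-transform construction of local stable manifolds, which provides uniform size estimates over a $C^1$-neighborhood of $f$ (shrinking $\delta$ if necessary). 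Taking $\rho<D/3$, the connected set $\cW^s_g(x,\mathrm{loc})$ cannot lie inside a single $\rho$-ball, hence cannot lie in the disjoint union $B_\rho$, contradicting $\Delta'\subset B_\rho$. Therefore $\Delta'\not\subset B_\rho$.

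\emph{Conclusion.} Fix $\delta,\beta>0$ as above (depending only on $f$ and $\Lambda$). Given $U$ and $J$, choose $\rho>0$ small enough for both steps and for the existence of $\cD$ as in Lemma~\ref{l=csectionexistence}. By the Remark following that lemma, pick $\sigma>0$ so that $d_{C^0}(f,g)<\sigma$ implies $\Lambda_U(g)\setminus B_\rho\subset\bigcup_i B_{\beta\rho}(q_i)$. Then for any such $g$ and any bisaturated partially hyperbolic $\Delta\subset U$ for $g$, any nonempty bisaturated $\Delta'\subset\Delta$ contains a point outside $B_\rho$ (by the second step), which lies in some $B_{\beta\rho}(q_i)$, and the first step then forces $\Delta'\cap V_\rho(q_i)\neq\emptyset$. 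Hence $|\cD|$ is a $c$-section for $(g,\Delta)$.

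\emph{Main obstacle.} The delicate point is the ordering of quantifiers. The constants $\delta,\beta$ must be chosen once and for all (depending on $f,\Lambda$) before $U$ and $J$, while $\rho$ is chosen after $U$ and $J$ and must simultaneously be small relative to the uniform local manifold diameter $D$ (fixed by $f,U$) and to the pairwise separation of points in $\Per_{\leq J}(f,U)$ (which degrades as $J$ grows). Making the transversality constants in Part A uniform in $x\in\Lambda_U(g)$ for $g$ ranging over a $C^1$-neighborhood of $f$, and the bookkeeping of these dependencies, is the principal technical work; it parallels the uniform chart-based estimates used in the proof of Proposition~\ref{l=localaccess}.
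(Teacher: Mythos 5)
Your overall scheme (find a point of the bisaturated set away from the low-period points, cover it by some $B_{\beta\rho}(q_i)$, then push it into $V_\rho(q_i)$ by a two-legged $su$-path) is the same as the paper's, but two steps as written would fail. The fatal one is your second step: you assert that $\Per_{\leq J}(f,U)$ is finite, so that $B_\rho$ is a disjoint union of small balls and no connected local stable manifold can fit inside it. There is no finiteness here: $f$ is an arbitrary diffeomorphism with a partially hyperbolic set, and periodic points of period $\leq J$ in $\overline U$ can form a continuum in the center direction (e.g.\ for a product of an Anosov map with the identity, a whole circle or torus of fixed points lies in $\Per_{\leq 1}$). The set $\Per_{\leq J}(f,U)$ is only compact, and its $\rho$-neighborhood can have large connected components containing entire local leaves, so the diameter argument collapses. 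The paper's replacement is essential and different: using property (\ref{e=lowperiod}) (a stable or unstable leaf of a low-period point meets $\Per_{\leq J}(f,U)$ only at that point) together with compactness and uniform leaf size, one shows (Claim~\ref{c1}) that every point of $\Lambda_U(g)$ in the $\rho_0$-neighborhood can be connected by a one-legged $su$-path to a point outside it; bisaturation of $\Delta$ then yields a point of $\Delta$ outside $B_\rho$. You need this idea; your topological ``can't fit'' argument cannot be repaired without it.

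The first step also has a gap at the crucial moment. Your two-legged path is prescribed deterministically: follow $\cW^u_g(x,\mathrm{loc})$ until it meets the slice $E^c_{q_i}\oplus E^s_{q_i}$, then follow the stable leaf of that point ``until it meets $V_\rho(q_i)$.'' The second leg is an $s$-dimensional manifold only approximately parallel to $E^s_{q_i}$, while the target $V_\rho(q_i)$ lifts to a $c$-dimensional disk; since $s+c<d$, the leaf will in general drift in the $E^u$-direction and miss the center disk by a small but nonzero amount, so the path lands only near $V_\rho(q_i)$, not in it — and exact intersection is precisely what the $c$-section property requires. The paper's Claim~\ref{c2} handles this by considering the whole $(u+s)$-parameter family of two-legged paths $\Phi(v,w)$ (unstable displacement $\rho v$, then stable displacement $\rho w$) and applying a Brouwer/degree argument, using that $\Phi$ restricted to the boundary of $B^u(0,1)\times B^s(0,1)$ avoids $E^c_{q_i}$, to produce a parameter for which the endpoint lies exactly in $B^c(0,\rho)\subset E^c_{q_i}$. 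Your choice of $\beta$ small controls the starting error but cannot by itself force the exact landing; some such topological argument must be added. (Your quantifier bookkeeping — $\delta,\beta$ first, $\rho$ after $U,J$, $\sigma$ last — is in order, but it does not address either of these two gaps.)
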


\begin{proof}
Consider $f,\Lambda,U,J$ satisfying the hypotheses.
The set  $\hbox{Per}_{\leq J}(f,U)$ is compact, partially hyperbolic and satisfies for every  $p\in \hbox{Per}_{\leq J}(f,U)$:
\begin{equation}\label{e=lowperiod}
\left(\cW^s(p)\cup \cW^u(p)\right) \cap \hbox{Per}_{\leq J}(f,U) = \{p\}.
\end{equation}

The following property follows easily from (\ref{e=lowperiod}).
\begin{claim}\label{c1}
\label{l=escape} There exist $\rho_0,\delta_0>0$
such that the set $B_0:= N_{\rho_0}(\hbox{Per}_{\leq J}(f,U))$ has the following properties.
For every diffeomorphism $g$ with $d_{C^1}(f,g)<\delta_0$, every
$p\in B_0\cap \Lambda_U(g)$ can be connected to a point in  $\bar U\setminus B_0$ by an $su$-path for $g$ with one leg.
\end{claim}

There is also  a projection onto admissible discs by $su$-paths.

\begin{claim}\label{c2}  There exist $\beta\in (0,1)$ and $\rho_1,\delta_1> 0$
such that for every diffeomorphism $g$ with $d_{C^1}(f,g)<\delta_1$,
every $\rho \in(0, \rho_1)$, every bisaturated set $\Delta\subset U$ for $g$,
every $q\in \Lambda_U(g)$ and $p\in B_{\beta\rho}(q)\cap \Delta$,
there is a $su$-path  for $g$ with $2$ legs from $p$ to some point in $V_{\rho}(q)$.
\end{claim}
\begin{proof}
The proof follows the same argument as in the proofs of Lemmas~\ref{c.cover0} and~\ref{l.criterion}.
Let $B^u(0,1)$ and $B^s(0,1)$ be the unit balls in the spaces $E^u_q$ and $E^s_q$.
Working in the chart $\phi_q$,
define a continuous map $\Phi$ from $B^u(0,1)\times B^s(0,1)$ to
$\varphi_q^{-1}(\Delta)\subset T_qM$ in the following way.
For $v,w\in B^u(0,1)\times B^s(0,1)$,  first project orthogonally along the space $E^c_q\oplus E^s_q$
the point $\varphi^{-1}_q(p)+\rho v$ onto the unstable leaf of $\varphi^{-1}_q(p)$ lifted in the chart:
this defines a point $y\in \varphi_q^{-1}(\Delta)$.
Then project orthogonally along the space $E^u_q\oplus E^c_q$
the point $y+\rho w$ onto the stable leaf of $y$ in the chart:
this defines the point $\varphi_p^{-1}(\Phi(v,w))$. It belongs to the bisaturated set $\Delta$.

The restriction of $\Phi$ to the boundary of $B^u(0,1)\times B^s(0,1)$ is disjoint from $E^c_q$,
and the Brouwer fixed point theorem ensures that the image of $\varphi_p^{-1}\circ \Phi$ intersects $E^c_q$.
Since the stable and unstable leaves of $g$ lifted in the chart are close to the directions $E^u_q$ and $E^s_q$,
the intersection point belongs to $B^c(0,\rho)\subset E^c_q$.
\end{proof}

Fix $\delta=\min(\delta_0,\delta_1)$, $\rho < \min\{\rho_0,\rho_1\}$ and let $\cD$ be the c-admissible disk family
with centers $q_1,\ldots , q_k \in \Lambda_U(f)\setminus B_\rho$ given by  Lemma~\ref{l=csectionexistence}.
Let $\sigma>0$ be given by the remark following Lemma~\ref{l=csectionexistence}.
Consider a diffeomorphism $g$ satisfying $d_{C^1}(f,g)<\delta$ and $d_{C^0}(f,g) < \sigma$,
and let $\Delta\subset U$ be a bisaturated partially hyperbolic set for $g$.

 Claim~\ref{c1} gives that
every $p\in B_\rho \cap \Delta$ can be connected to a point in 
$\Lambda_U(g)\setminus B_\rho$ by a $su$-path for $g$. Since $\Delta$ is bisaturated, it intersects $\Lambda_U(g)\setminus B_\rho$.

By Lemma~\ref{l=csectionexistence} and the remark that follows,
the balls $B_{\beta\rho}(q_1), \cdots, B_{\beta\rho}(q_k)$ cover $\Lambda_U(g)\setminus B_\rho$.
Hence, there exists $i$ such that $\Delta \cap B_{\beta\rho}(q_i) \neq \emptyset$.  Now the Claim~\ref{c2} and the bisaturation of $\Delta$ imply
that $\Delta\cap V_\rho(q_i)\neq \emptyset$.

We have thus shown that $\Delta$ intersects $|\cD|$,
as required.
\end{proof}

\end{document}